\newcommand{\Aff}{\mathbb{A}}
\newcommand{\ZZ}{\mathbb{Z}}
\newcommand{\KK}{\mathbb{K}}
\newcommand{\RR}{\mathbb{R}}
\newcommand{\PP}{\mathbb{P}}
\newcommand{\QQ}{\mathbb{Q}}
\newcommand{\D}{\mathcal{D}}
\newcommand{\CO}{\mathcal{O}}
\newcommand{\bx}{{\bf x}}
\newcommand{\fv}{\mathfrak{v}}
\renewcommand{\H}{\mathcal{H}}
\newcommand{\G}{\mathcal{G}}
\newcommand{\mcL}{\mathcal{L}}
\newcommand{\mcP}{\mathcal{P}}
\newcommand{\F}{\mathcal{F}}
\DeclareMathOperator{\trop}{Trop}
\DeclareMathOperator{\Pic}{Pic}
\DeclareMathOperator{\im}{Im}
\DeclareMathOperator{\In}{In}
\DeclareMathOperator{\proj}{Proj}
\DeclareMathOperator{\spec}{Spec}
\DeclareMathOperator{\Div}{Div}
\DeclareMathOperator{\Cl}{Cl}
\DeclareMathOperator{\cox}{Cox}
\DeclareMathOperator{\rank}{rank}
\DeclareMathOperator{\ord}{ord}
\DeclareMathOperator{\id}{id}
\DeclareMathOperator{\Gr}{Gr}
\newtheorem{thm}{Theorem}[section]
\newtheorem{prop}[thm]{Proposition}
\newtheorem{lemma}[thm]{Lemma}
\newtheorem{cor}[thm]{Corollary}
\theoremstyle{definition}
\newtheorem{defn}[thm]{Definition}
\newtheorem{rem}[thm]{Remark}
\newtheorem{ex}[thm]{Example}
\title{Rational Complexity-One $T$-Varieties are Well-Poised}
\author{Nathan Ilten}
\address{Department of Mathematics, Simon Fraser University,
8888 University Drive, Burnaby BC V5A1S6, Canada}
\email{nilten@sfu.ca}
\author{Christopher Manon}
\address{Department of Mathematical Sciences, George Mason University, 4400 University Drive, MS:  3F2
Exploratory Hall, room 4400 Fairfax, Virginia  22030}
\email{cmanon@gmu.edu}
\begin{document}

\begin{abstract}
	Given an affine rational complexity-one $T$-variety $X$, we construct an explicit embedding of $X$ in affine space $\Aff^n$. We show that this embedding is well-poised, that is, every initial ideal of $I_X$ is a prime ideal, and we determine the tropicalization $\trop(X^\circ)$. We then study valuations of the coordinate ring $R_X$ of $X$ which respect the torus action, showing that for full rank valuations, the natural generators of $R_X$ form a Khovanskii basis. This allows us to determine Newton-Okounkov bodies of rational projective complexity-one $T$-varieties, partially recovering (and generalizing) results of Petersen. We apply our results to describe all integral special fibers of $\KK^*\times T$-equivariant degenerations of rational projective complexity-one $T$-varieties, generalizing a result of S\"u\ss{} and the first author.
\end{abstract}
\maketitle
\section{Introduction}

Let $X \subset \Aff^n$ be an irreducible affine variety  over a trivially valued, algebraically closed field  $\KK$.   There is a corresponding presentation of the coordinate ring $R_X$ of $X$ by the polynomial algebra $\KK[\bx]$, where $\bx = \{x_1, \ldots, x_n\}$:

$$
\begin{CD}
0 @>>> I_X @>>> \KK[\bx] @>\pi>> R_X @>>> 0.\\
\end{CD}
$$

\bigskip
\noindent
Here $I_X$ is the prime ideal of polynomials in $\KK[\bx]$ which vanish on $X$.    We are concerned with the \emph{tropical geometry} of the variety $X$. We consider the tropical variety $\trop(X^\circ) \subset \RR^n$ associated to the very affine variety $X^\circ=X\cap(\KK^*)^n$ (\cite[Definition 3.2.1]{tropical} and \S \ref{sec:tropical}).  We assume that $X^\circ \neq \emptyset$.  Recall that to each point $w \in \trop(X^\circ)$ there is an associated initial ideal $\In_{w}(I_X) \subset \KK[\bx]$.  The purpose of this paper is in part to introduce the following property for affine embeddings:  

\begin{defn}
The embedding of $X$ into $\Aff^n$ is said to be \emph{well-poised} if $\In_{w}(I_X)$ is a prime ideal for all $w \in \trop(X^\circ)$. 
\end{defn}

Well-poised embeddings can tell us much about the geometry of $X$. For example, when $I_X$ is a homogeneous ideal, all of the Gr\"obner degenerations associated to the tropical points $w \in \trop(I_X)$ of a well-poised embedding are reduced, irreducible varieties.  In particular, a generic tropical point defines a (possibly non-normal) toric degeneration of $X$.

Given the connection with toric geometry, it is not surprising that affine toric varieties themselves always have well-poised embeddings.   It can be arranged that the coordinate ring of an affine toric variety $X$ be presented by a prime binomial ideal $J$ \cite[Proposition 1.1.9]{CLS}. With respect to this embedding, $\trop(X^\circ)$ is a rational vector space of dimension $\dim X$, and every initial ideal is $J$ itself.   

A more exotic example of a well-poised embedding is provided by the Pl\"ucker embedding of the Grassmannian variety of $2$-planes:
\[\Gr(2,r) \subset \mathbb{P}^{\binom{r}{2}-1},\]
 or rather, the embedding of the affine cone of this variety in $\Aff^{\binom{r}{2}}$.   The tropical variety of this embedding is known as the tropical Grassmannian, and is described in the seminal paper \cite{Speyer-Sturmfels} of Speyer and Sturmfels.  The toric degenerations defined by the initial ideals of the tropical Grassmannian are also interesting objects. They have appeared in the study of integrable systems \cite{Nishinou-Nohara-Ueda} \cite{Howard-Manon-Millson}, and invariant theory \cite{Howard-Millson-Snowden-Vakil}.  The variety $\Gr(2,4)$ is an instructive special case.  This variety is cut out by a single equation $p_{12}p_{34} - p_{13}p_{24} + p_{14}p_{23}$ among the six Pl\"ucker variables $p_{ij}, 1 \leq i< j \leq 4$.   There are three ways to drop a monomial from this equation in order to make an irreducible binomial; these correspond to the three maximal cones of the tropical variety $\trop(\Gr(2,4)^\circ)$.

We can generalize the description of the case $\Gr(2,4)$ to a special type of hypersurface. Consider a polynomial ring $\KK[\bx]$ and any partition $\{S_1, \ldots, S_m\}$ of $[n]$.  This  defines an irreducible polynomial $f = \sum_{i=1}^m \prod_{j \in S_i}x_j$.  Any initial form extracted from this polynomial is likewise irreducible, so it follows that the embedded hypersurface cut out by $f$ is well-poised.  This last example is also interesting because when $m = 3$, the variety $V(f) \subset \Aff^n$ admits an effective action of an algebraic torus $T$ of dimension one less than $\dim V(f)$.  

 In general, a complexity-$k$ $T$-variety is a normal, irreducible variety $X$ equipped with an effective action by an algebraic torus $T$ such that $\dim T=\dim X-k$. By considering the action of a subtorus, any complexity-$k$ $T$-variety can be viewed as a complexity-$\ell$ $T$-variety for $\ell>k$.  In this sense, complexity-one $T$-varieties are a natural generalization of normal toric varieties, namely $T$-varieties of complexity-zero.   Our first theorem generalizes the observation that affine toric varieties possess well-poised embeddings to a larger class of $T$-varieties: 

\begin{thm}[Theorem \ref{thm:poised}]\label{main}
Every affine rational complexity-one $T$-variety has an equivariant embedding $X\hookrightarrow \Aff^n$ which is well-poised.
\end{thm}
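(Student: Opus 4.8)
The plan is to make the combinatorial geometry of $X$ explicit through its polyhedral divisor, read off from it a preferred equivariant embedding together with an explicit list of defining equations, determine $\trop(X^\circ)$ in those coordinates, and then verify primeness of every initial ideal. By the Altmann--Hausen correspondence, an affine complexity-one $T$-variety is encoded by a polyhedral divisor $\D=\sum_z\D_z\otimes z$ on a smooth curve $Y$; rationality forces $Y$ to be $\PP^1$ (or an affine open subset), with all but finitely many $\D_z$ equal to the tail cone $\sigma\subseteq N_\QQ$, so that $R_X=\bigoplus_{u\in\sigma^\vee\cap M}H^0(\PP^1,\CO(\D(u)))\chi^u$. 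From this description I would select a $T$-homogeneous generating set: the characters $\chi^{u}$ for $u$ in a Hilbert basis of $\sigma^\vee\cap M$, together with, for each of the finitely many special points $z_0,\dots,z_r$ of $\PP^1$ and each vertex of $\D_{z_i}$, one generator $T_{ij}$ realized by a rational function on $\PP^1$ with prescribed order of vanishing at $z_i$ and at a fixed reference point. That these generate $R_X$ (after possibly enlarging the Hilbert basis and clearing denominators) is part of the structure theory, and it produces the sought equivariant closed embedding $X\hookrightarrow\Aff^n$.

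Next I would pin down the ideal $I_X$. It should be generated by the toric binomials of the affine semigroup $\sigma^\vee\cap M$ among the $\chi^u$, together with, for a spanning family of triples $(z_i,z_j,z_k)$ of special points, one ``trinomial'' $g_{ijk}=\lambda_i m_i+\lambda_j m_j+\lambda_k m_k$, where each $m_\bullet$ is a monomial in the $T$'s (times a character). These trinomials are forced by the two-dimensionality of $H^0(\PP^1,\CO(1))$: any three global sections each vanishing to order one at a single point are linearly dependent. They simultaneously generalize the hypersurface $f=\sum_{i=1}^3\prod_{j\in S_i}x_j$ and the Grassmannian relation from the introduction, and collapse to binomials in the toric case where the $\D_{z_i}$ are trivial. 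Checking that these elements generate all of $I_X$, and not merely a subideal, is a dimension count.

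The tropicalization then splits into two kinds of contributions: the binomials cut out a linear subspace of $\RR^n$, while each trinomial tropicalizes to the minimum of three linear forms, contributing a ``tropical line'' worth of additional cones as in the Speyer--Sturmfels description of $\trop(\Gr(2,4)^\circ)$; gluing these pieces gives an explicit polyhedral structure on $\trop(X^\circ)$. The key point is that, by the fundamental theorem of tropical geometry, a weight $w$ lies in $\trop(X^\circ)$ precisely when $\In_w(I_X)$ contains no monomial, so for such $w$ the initial form $\In_w(g_{ijk})$ of each trinomial is always $g_{ijk}$ itself or a binomial $\lambda_i m_i+\lambda_j m_j$, never a single monomial. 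To conclude well-poisedness I would show that the chosen generators of $I_X$ form a Gr\"obner basis with respect to every such $w$: via Buchberger's $S$-polynomial criterion, using that the variable supports of the monomials occurring in the trinomials overlap only in a controlled way, all relevant $S$-polynomials reduce to zero uniformly in $w$.

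Granting this Gr\"obner basis property, $\In_w(I_X)$ is generated by the unchanged toric binomials, the binomials obtained from degenerate trinomials, and the surviving full trinomials. One then recognizes the binomial part as the ideal of a toric variety $X_w$ --- prime once one checks torsion-freeness of the relevant lattice quotient --- inside which the surviving trinomials cut out a rational complexity-one $T$-variety, hence an irreducible variety; therefore $\In_w(I_X)$ is prime. The main obstacle I anticipate is exactly this last recognition in the ``mixed'' regime, where some trinomials degenerate to binomials while others remain trinomials, so that $\In_w(I_X)$ genuinely interleaves toric and non-toric relations: one must verify that the non-toric relations still cut out an irreducible subvariety of $X_w$. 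I would handle this by tracking the rational parametrization of $X$ by a torus and a copy of $\PP^1$ through the degeneration, showing that it persists and realizes $\In_w(I_X)$ as the ideal of the closure of a single orbit; establishing the Gr\"obner basis property uniformly in $w$ is the other place where genuine work is required.
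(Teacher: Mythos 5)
Your overall framework is the paper's: encode $X$ by a polyhedral divisor on $\PP^1$, extract a $T$-homogeneous generating set for $R_X$ from the combinatorial data, note that the relations split into toric binomials plus lifts of linear forms (your ``trinomials''), compute the tropicalization as a linear space times a tropical line, and then argue primeness of every initial ideal. Where the two diverge is in the last step, and the divergence matters.

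You propose to verify well-poisedness via Buchberger's criterion, showing the chosen generators form a Gr\"obner basis uniformly in $w$, and then to analyze the resulting mixture of binomials and surviving trinomials directly, falling back on a parametrization argument in the hard ``mixed regime.'' The paper avoids all of this with one structural observation you circle around but never quite land on: the initial ideal $\In_w(I_X)$ is itself the ideal of a semi-canonical embedding. Concretely, the paper first proves (Lemma \ref{lemma:preimage}) that $X^\circ = T \times L^\circ$ for a line $L\subset\PP^m$, then (Proposition \ref{prop:ideal}) that $\widetilde I(L)$ is generated by elements $g\,z^v\chi^u$ with $g$ ranging over linear generators of $I(L)$ and $(u,v)$ over a fixed finite set $\mcP$ independent of $L$. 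Because $\mcP$ does not depend on $L$, one checks at once that for $w\in\trop(X^\circ)$ interpreted as a weight on $A(C)$, $\In_w(\widetilde I(L)) = \widetilde I(L_w)$ where $L_w = \overline{V(\In_w I(L))}$ is again a line in $\PP^m$ meeting the torus. Primeness is then automatic: $\widetilde I(L')$ is prime for \emph{any} line $L'$, being the ideal of the closure of the irreducible $T\times L'^\circ$. There is no Buchberger computation, no dichotomy between degenerate and surviving trinomials, and no separate argument for the mixed regime --- the degeneration simply moves the line, and the construction is functorial under that motion.

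Two places where your sketch, taken on its own terms, would still need real work. First, the claim that the binomials plus one trinomial per spanning triple generate $I_X$ is not a ``dimension count'': you need the precise module-generating set over $C^\vee$ (the polytope $P$ and the set $\mcP$ in the paper) to ensure the products $g z^v\chi^u$ stay inside $A(C)$ and to generate the whole ideal; without that, both the Gr\"obner claim and the tropical-basis claim are unsupported. Second, your description of $\trop(X^\circ)$ as an intersection of tropical hypersurfaces presupposes that your generators form a tropical basis, which is exactly what has to be proved (the paper does this via functoriality of tropicalization under the monomial map $X(C)\to\Aff^n$, Proposition \ref{prop:tropmap} and Corollary \ref{cor:tropbasis}). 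If you commit to the ``degenerate the line'' viewpoint from your final paragraph and make it the main argument rather than a fallback, both difficulties dissolve and you recover the paper's proof.
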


To prove this theorem, for any affine rational complexity-one $T$-variety $X$, we construct an explicit family of embeddings, which we call \emph{semi-canonical embeddings}. Such an embedding is canonically determined by the geometry of $X$ up to equivariant isomorphism and the action of $(\KK^*)^n$.

The main idea behind the semi-canonical embedding is to embed $X$ in such a fashion such that the geometry of $X^\circ$ becomes as simple as possible. Under this embedding $X\subset \Aff^n$, the intersection of $X$ with $(\KK^*)^n$ is simply the product of the torus $T$ with $L^\circ=L\cap (\KK^*)^m$, where $L$ is a certain line embedded in projective space $\PP^m$ meeting the dense torus $(\KK^*)^m$.  Furthermore, the ideal $I_X$ describing a semi-canonical embedding $X\subset \Aff^n$ can be determined from the quasi-combinatorial data describing the geometry of $X$. We describe this embedding in detail in \S \ref{sec:embed} and \S \ref{sec:ideal}, and show in \S \ref{sec:trop} that every semi-canonical embedding is in fact well-poised.

Since the geometry of $X^\circ$ is especially simple, so is that of $\trop(X^\circ)$: modulo lineality space, $\trop(X^\circ)$ is just the tropical line $\trop(L^\circ)$. In particular, we are able to describe a tropical basis for $X^\circ$, see Corollary \ref{cor:tropbasis}.

When an embedding of $X$ is well-poised, work in \cite[Section 10]{Gubler-Rabinoff-Werner} implies the existence of a section to the tropicalization map from the Berkovich skeleton of $X$ to its tropicalization.  Our results here imply the existence of such a section over the tropicalization of any semi-canonical embedding.  The presence of a large torus action gives this result a similar flavor to work of Draisma and Postinghel in \cite{Draisma-Postinghel}. 

In the second half of the paper, we study the relationship between semi-canonical embeddings, higher rank valuations, and the theory of Newton-Okounkov bodies. For any full rank valuation $\fv$ of the coordinate ring $R_X$, its image is a semigroup $S(R_X,\fv)$. A set of generators of $R_X$ is a \emph{Khovanskii basis} if their images generate $S(R_X,\fv)$, see \S\ref{sec:val}. 
Our second main result shows the following:
\begin{thm}[Theorem \ref{thm:khovanskii}]
Let $X$ be an affine rational complexity-one $T$-variety, and $\fv$ a full rank valuation which is homogeneous with respect to the grading induced by the $T$-action. Then $X$ has a semi-canonical embedding for which the corresponding generators of $R_X$ form a Khovanskii basis.
  \end{thm}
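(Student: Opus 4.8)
The plan is to recognize $\fv$ as the valuation naturally attached to a maximal cone of $\trop(X^\circ)$ for a well-chosen semi-canonical embedding, and then to quote the dictionary of Kaveh and Manon relating Khovanskii bases to prime cones of tropical varieties. Recall the upshot of that dictionary: if $C\subseteq\trop(X^\circ)$ is a \emph{prime cone} of maximal dimension — meaning $\In_w(I_X)$ is a prime ideal for every $w\in C$ — then there is a full rank valuation $\fv_C$ of $R_X$ for which the coordinate functions $x_1,\dots,x_n$ form a Khovanskii basis, with $\In_w(I_X)$ (for $w$ generic in $C$) presenting the associated graded semigroup algebra; moreover $\fv_C$ is pinned down by $C$ together with the auxiliary choice of an injective functional on the lineality lattice of $\trop(X^\circ)$, and one may take $\fv_C$ homogeneous for any grading that all the $\In_w(I_X)$ respect. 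By Theorem~\ref{thm:poised} every semi-canonical embedding is well-poised, so \emph{every} maximal cone of $\trop(X^\circ)$ is a prime cone. Hence it suffices to produce one semi-canonical embedding and one maximal cone $C$ of $\trop(X^\circ)$ for which $\fv_C$ coincides with $\fv$ up to this permitted ambiguity.

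First I would analyze the given valuation using the product description $X^\circ\cong T\times L^\circ$ of a semi-canonical embedding. Since $\fv$ is homogeneous for the $M$-grading induced by the $T$-action and has full rank $\dim X=\dim T+1$, its restriction to $\KK[M]\subseteq\KK[X^\circ]$ is a group homomorphism $\ell\colon M\to\Gamma$ onto a subgroup of rank $\dim T$, and the one remaining unit of rank is carried by a rank-one homogeneous valuation of the invariant function field $\KK(X)^T\cong\KK(L)\cong\KK(\PP^1)$, which after rescaling is $\ord_p$ for a unique point $p\in\PP^1$. Concretely, writing $f\in R_X$ as $\sum_u f_u$ with $f_u$ in the weight space $(R_X)_u$, the value $\fv(f)$ is, in the relevant lexicographic order, the minimum over the nonzero $f_u$ of the pair $\bigl(\ell(u),\,\ord_p(f_u)\bigr)$; so $\fv$ ``points toward'' the point $p$ of the curve. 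I would either cite this structure theorem for homogeneous full rank valuations on rational complexity-one $T$-varieties from the Altmann--Hausen/Petersen circle of ideas, or derive it directly from $X^\circ\cong T\times L^\circ$.

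Next I would build the embedding. Using the constructions of \S\ref{sec:embed} and \S\ref{sec:ideal}, together with the freedom to enlarge the support of the defining polyhedral divisor $\D$ by adjoining $p$ with trivial coefficient $\D(p)=\sigma$ (which leaves $X$ unchanged), I would produce a semi-canonical embedding $X\hookrightarrow\Aff^n$ in which $p$ occurs as a distinguished point of the line $L\subset\PP^m$ — i.e. $p\in L\setminus L^\circ$ sits on a single new coordinate hyperplane and no others. By the description of $\trop(X^\circ)$ (modulo its lineality space $M^\vee\otimes\RR$ it is the tropical line $\trop(L^\circ)$, whose rays are indexed by the distinguished points of $L$; cf.\ Corollary~\ref{cor:tropbasis}), the ray toward $p$ together with the lineality span a maximal cone $C$. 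Tracking the coordinate functions, $\fv_C$ is of $\ord_p$-type in the ray direction and is governed by an arbitrary injective functional on $M$ in the lineality directions; matching that functional to $\ell$ shows $\fv=\fv_C$ up to the permitted ambiguity, and well-poisedness of the embedding makes $C$ a prime cone, so the coordinate functions form a Khovanskii basis for $\fv$.

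The hard part will be the analysis of an arbitrary homogeneous full rank $\fv$: extracting from it the point $p$ of the curve (handling every case of the structure theorem, including potential ``horizontal'' behaviour) and then checking that $\fv_C$ really does agree with $\fv$. A delicate point is that $p$ may a priori be an interior point of $L^\circ$, so that $\fv$ points only along the lineality space of the \emph{original} embedding and its coordinate functions are then \emph{not} a Khovanskii basis; the theorem is precisely the observation that re-embedding so that $p$ becomes a distinguished point cures this, which is why the statement asserts the existence of \emph{a} semi-canonical embedding rather than a property of all of them. Verifying that this re-embedding can always be arranged, and that it preserves the well-poisedness guaranteed by Theorem~\ref{thm:poised}, should be routine given the explicit nature of the semi-canonical construction.
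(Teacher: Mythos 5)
Your plan is correct in outline, but it takes a genuinely different route from the paper, and I want to flag how the two differ and where the real work sits in yours.

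The paper first proves a structure lemma (Proposition~\ref{prop:homog}, via Lemma~\ref{lemma:val}) showing every $M$-homogeneous valuation is $\fv_{(\psi,Q,\gamma)}$ for some $Q\in L$, then observes (in a remark) that one can re-embed $L$ to push $Q$ into $L\setminus L^\circ$, and then proves the Khovanskii property by an entirely elementary, \emph{direct} computation: given a homogeneous $f\cdot\chi^u\in A(L)$, with $v_i:=\ord_{Q_i}f$ the monomial $z^v\cdot\chi^u$ lies in $A(C)$ by Remark~\ref{rem:inac}, is a monomial in the semi-canonical generators, and has the same $\fv$-value as $f\cdot\chi^u$ because only $z_j$ (the coordinate vanishing at $Q=Q_j$) contributes to $\ord_Q$. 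That three-line verification immediately gives the explicit value semigroup of Corollary~\ref{cor:cone} for free.

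You instead propose to quote the Kaveh--Manon dictionary (prime cone $\Rightarrow$ $\fv_W$ is a genuine valuation with the $x_i$ a Khovanskii basis, via primeness of $\In_W(I_X)$ making the associated graded a domain generated by the images of the $x_i$), invoke Theorem~\ref{thm:poised} to certify every maximal cone is prime, and then identify your given $\fv$ with some $\fv_W$. The logical bottleneck is exactly that identification: it is not automatic that an abstractly-given homogeneous full-rank $\fv$ equals some $\fv_W$ --- one only gets $\fv\geq\fv_W$ from agreement on generators. You close the gap the right way, by observing that $\fv_W$ is itself $M$-homogeneous (this needs a small argument, carried out in the paper as the $(3\Rightarrow1)$ step of Theorem~\ref{thm:chris}), and then appealing to the uniqueness in the structure lemma to match the data $(\psi,Q,\gamma)$. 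So the route works, and it does not circle back through Theorem~\ref{thm:khovanskii} itself, since both Theorem~\ref{thm:poised} and Proposition~\ref{prop:homog} are independent. What it buys you is a conceptual link to tropical geometry and prime cones; what it costs is that you end up reproving a sizable chunk of the paper's Theorem~\ref{thm:chris} (whose actual proof in the paper goes $(1)\Rightarrow(2)\Rightarrow(3)\Rightarrow(1)$ and \emph{uses} Theorem~\ref{thm:khovanskii} as input for $(1)\Rightarrow(2)$), whereas the paper's direct monomial-matching argument is shorter, self-contained, and hands you the value semigroup on a plate. If you write this up, be careful to prove the homogeneity of $\fv_W$ before using the uniqueness of Proposition~\ref{prop:homog}, and to check that the re-embedding adjoining $Q$ as a new boundary point with coefficient $\sigma$ does not change $X$ (which is indeed routine, as you say).
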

\noindent Note that conversely, Theorem \ref{main} and \cite[\S 4]{Kaveh-Manon-NOK} imply that for any semi-canonical embedding of $X$, the corresponding generators of $R_X$ do form a Khovanskii basis for some full rank homogeneous valuation $\fv$.

Our second main result has a number of  consequences. We are able to give explicit descriptions of the value semigroups $S(R_X,\fv)$ for homogeneous valuations (Corollary \ref{cor:cone}). It turns out that all such valuations can be constructed as \emph{weight valuations} using the machinery of Kaveh and the second author, see \S\ref{sec:tropval}, Theorem \ref{thm:chris}, and \cite[\S 4]{Kaveh-Manon-NOK}.

Newton-Okounkov bodies of complexity $1$ $T$-varieties have been studied by Petersen in \cite{petersen}, where he gives a construction of Newton-Okounkov polytopes for any projective, complexity-one $T$-variety $Y$ with respect to valuations obtained from  $T$-invariant flags. As a corollary of our results mentioned above, we recover Petersen's description in the case when $Y$ is rational, and show that in fact, the Newton-Okounkov body with respect to \emph{any} homogeneous valuation is of the form described by Petersen, see \S\ref{sec:petersen}. Petersen also shows that the \emph{global} Newton-Okounkov bodies for complexity-one $T$-varieties are polyhedral; we easily recover this result again in the case of rational $T$-varieties, see \S \ref{sec:global}. 

As a further application of our second main theorem, we determine all integral special fibers of $\KK^*\times T$-equivariant degenerations of rational projective complexity-one $T$-varieties, see Theorem \ref{thm:degen}. In \cite{kstab}, S\"u\ss{} and the first author described all \emph{normal} irreducible special fibers of such degenerations. Our theorem generalizes this result to allow for non-normal special fibers.

There is a large amount of literature devoted to studying the \emph{algebra} of coordinate rings and ideals of toric varieties, for example, quadratic generation and Gr\"obner bases (e.g. \cite{quadratic}), Koszulness (e.g. \cite{koszul}), and free resolutions (e.g. \cite{syzygies}). We hope that the  introduction of semi-canonical presentations for the coordinate rings of affine rational complexity-one $T$-varieties will lead to a similar study of the algebra of such $T$-varieties. 

We now describe the structure of the remainer of this paper. In \S \ref{sec:embed}, we recall basics about $T$-varieties, construct our semi-canonical embeddings for affine rational complexity-one $T$-varieties, relate this to previous constructions, and discuss the projective case. We then precisely describe the ideal of a semi-canonical embedding in \S \ref{sec:ideal}. In \S\ref{sec:groebner} we recall basics of Gr\"obner theory and tropical geometry, and then proceed to analyze the initial ideals arising for semi-canonical embeddings, proving our Theorem \ref{main}. In \S\ref{sec:part2}, we move on to a discussion of valuations on coordinate rings of rational complexity-one $T$-varieties and their connections to semi-canonical embeddings. Finally, we apply our results in \S\ref{sec:test} to classify integral special fibers  of $\KK^*\times T$-equivariant degenerations of rational projective complexity-one $T$-varieties.

\section{Semi-Canonical Embeddings}\label{sec:embed}
\subsection{$T$-Varieties}\label{sec:tvar}
Let $T$ be an algebraic torus, with character lattice $M$ and co-character lattice $N$.
Recall that a \emph{$T$-variety} is a normal variety $X$ equipped with an effective action $T\times X\to X$. The \emph{complexity} of $X$ is $\dim X-\dim T$.  Such varieties may be described in terms of a `quotient' variety $Y$ of dimension equal to the complexity, equipped with some combinatorial data \cite{pdiv,dfan,tsurvey}. 

We briefly survey this correspondence for affine $T$-varieties. Let $Y$ be any normal variety, and fix a pointed, polyhedral cone $\sigma$ in $N_\QQ=N\otimes \QQ$. A \emph{polyhedral divisor} on $Y$ with tailcone $\sigma$ is a formal finite sum
\[
\D=\sum_{i=0}^m \Delta_i\cdot P_i
\]
where the $P_i$ are distinct prime divisors on $Y$, and the coefficients $\Delta_i$ are either polyhedra in $N_\QQ$ with tailcone $\sigma$, or the empty set.. Recall that the \emph{tailcone} of a polyhedron $\Delta\subset N_\QQ$ is the set of all $v\in N_\QQ$ such that $\Delta+v\subseteq \Delta$.

The polyhedral divisor $\D$ induces a piecewise linear convex map
\begin{align*}
\D:\sigma^\vee &\to \Div_{\QQ\cup\infty} Y\\
u&\mapsto \sum_i \Delta_i(u)\cdot P_i
\end{align*}
where 
\begin{align*}
\Delta_i(u):= \min_{v\in\Delta_i}\langle v,u\rangle
\end{align*}
and $\Delta_i(u)=\infty$ if $\Delta_i=\emptyset$.
 We use this to construct a $T$-scheme
\[
X(\D)=\spec \bigoplus_{u\in M\cap\sigma^\vee} H^0(Y,\CO(\D(u)))
=\spec \bigoplus_{u\in M\cap\sigma^\vee} H^0(Y,\CO(\lfloor \D(u)\rfloor)).
\]
Here an $\infty$-coefficient in a divisor means that we allow poles with arbitrary order along the corresponding prime divisor. 
If one imposes certain positivity conditions on $\D$, $X(\D)$ is actually a $T$-variety, and every affine $T$-variety can be constructed in this fashion \cite[Theorems 3.1 and 3.4]{pdiv}. Note that in \cite{pdiv}, these theorems are only stated when the characteristic of $\KK$ is zero. However, the proofs of these theorems should go through essentially verbatim in positive characteristic as well. We are only interested in the complexity-one case, which is dealt with explicitly in \cite{langlois}.

In this paper, we are primarily concerned with rational, complexity-one, affine $T$-varieties. In this situation, we can take the quotient $Y$ to be $\PP^1$, and the positivity condition on $\D$ is exactly that $\sum \Delta_i\subsetneq \sigma$ \cite[Example 2.12]{pdiv}.
In the following, we will show how to embed such a variety $X$ equivariantly in a particular toric variety. We will call this embedding a \emph{semi-canonical embedding} of $X$. The ambient toric variety is uniquely determined by $X$ up to equivariant isomorphism.

\subsection{Affine Embeddings}\label{sec:affine}
Suppose that we are given a rational, complexity-one affine $T$-variety $X$, which we have described as in \S\ref{sec:tvar} in terms of a polyhedral divisor \[\D=\sum_{i=0}^m \Delta_i P_i\]
 on $\PP^1$.
As before, the $\Delta_i$ have common pointed tailcone $\sigma$ (or equal $\emptyset$), and  $\sum \Delta_i\subsetneq \sigma$.
We will show how to embed $X=X(\D)$ into an affine toric variety.

 From the above data, we construct a pointed polyhedral cone in $N_\QQ\times\QQ^m$:
\[
C=\QQ_{\geq 0}\cdot \left(\{\Delta_i\times e_i\}_{i=0}^m\cup (\sigma\times 0)\right)
\]
where $e_1,\ldots,e_m$ is the standard basis of $\QQ^m$, and $e_0=-\sum_{i\neq 0} e_i$.
Note that $C$ is pointed, hence $C^\vee$ is full-dimensional. 

For any line $L\subset \PP^m=\proj \KK[y_0,\ldots,y_m]$ intersecting the torus $(\KK^*)^m$, let $Q_i$ be the restriction to $L$ of $V(y_i)$. 
We construct the following objects:
\begin{align*}
A(L)&=\bigoplus_{u\in\sigma^\vee\cap M} H^0\left(L,\CO\left(\sum_i \lfloor \Delta_i(u) \rfloor \cdot Q_i \right)\right)\cdot \chi^u\\
A(C)&=\KK[C^\vee\cap (M\times \ZZ^m)]=\bigoplus_{(u,v)\in C^{\vee}\cap M\times \ZZ^m} \KK\cdot \chi^{(u,v)}\\
X(L)&=\spec A(L)\qquad X(C)=\spec A(C).
\end{align*}
The functions $\chi^u$ are characters on the torus $T$, and keep track of the multidegree of homogeneous elements of $A(L)$; the functions $\chi^{(u,v)}$ are characters on the bigger torus $T\times (\KK^*)^m$ and keep track of the multidegree of homogeneous elements of $A(C)$.

\begin{rem}\label{rem:choice}
Our complexity-one $T$-variety $X(\D)$ is equivariantly isomorphic to $X(L)$ for an appropriate choice of $L$. Indeed, choose any linear embedding of $\PP^1$ into $\PP^m$ such that the divisor $V(y_i)$ on $\PP^m$ pulls back to the point $P_i\in \PP^1$. Taking $L_\D$ to be the image of $\PP^1$ under this embedding, the pullback to $\PP^1$ of $Q_i$ is exactly $P_i$. It follows by construction that $X(\D)$ is equivariantly isomorphic to $X(L_\D)$.

Concretely, such an embedding $\PP^1\to \PP^m$ can be constructed as follows. Representing each point $P_i\in\PP^1$ by a choice of homogeneous coordinates $(p_{i0}:p_{i1})$, we can embed $\PP^1$ into $\PP^m$ via 
\begin{align*}
\phi:\PP^1&\to \PP^m\\
(s_0:s_1)&\mapsto (s_0p_{00}-s_1p_{01}:s_0p_{10}-s_1p_{11}:\ldots:s_0p_{m0}-s_1p_{m1}).
\end{align*}
Note that different choices $(p_{i0}:p_{i1})$ representing the points $P_i$ correspond to acting on $L_\D$ by $(\KK^*)^m$.
\end{rem}

For any line $L\subset \PP^m$, we will show that $X(L)$ comes with a natural closed embedding in the toric variety $X(C)$. 
Let $I(L)$ be the ideal of $L^\circ=L\cap (\KK^*)^m$ in $(\KK^*)^m$. We define $\widetilde{I}(L)$ to be the ideal generated by 
\[
\{f\cdot \chi^{u}\ |\ f\in I(L)\ \textrm{and}\ f\cdot \chi^u\in A(C)\}_{u\in M}.
\] 

\begin{thm}\label{thm:embed}
There is a natural exact sequence
\[
\begin{tikzcd}
0\arrow{r} & \widetilde{I}(L)_u \arrow{r}& A(C)_u \arrow{r}& A(L)_u  \arrow{r} & 0
\end{tikzcd}
\]
for every $u\in M\cap \sigma^\vee$, and hence an embedding of $X(L)$ in $X(C)$ with ideal $\widetilde{I}(L)$.
\end{thm}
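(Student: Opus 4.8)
The plan is to realize both $A(L)$ and $A(C)$ as subalgebras of the coordinate ring $\KK[M\times\ZZ^m]=\KK[T\times(\KK^*)^m]$ of the big torus, and to exhibit the claimed sequence as the restriction to $A(C)$ of the surjection $\KK[M\times\ZZ^m]\twoheadrightarrow\KK[T\times L^\circ]$. The first step is to compute the dual cone $C^\vee$ and thereby the $M$-graded pieces of $A(C)$. Writing $t_i=\chi^{e_i}$ for the characters of $(\KK^*)^m$, so that $t_i$ is the restriction of $y_i/y_0$, and using $e_0=-\sum_{i\neq 0}e_i$, a direct computation shows that for $u\in\sigma^\vee\cap M$ a lattice point $(u,w)$ lies in $C^\vee$ exactly when $w_i+\lfloor\Delta_i(u)\rfloor\geq 0$ for $i=1,\dots,m$ and $\lfloor\Delta_0(u)\rfloor-\sum_{i=1}^m w_i\geq 0$ (an $\infty$-coefficient, coming from $\Delta_i=\emptyset$, imposing no constraint). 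These are precisely the conditions $\operatorname{div}(t^w)+D_u\geq 0$ on $\PP^m$, where $D_u=\sum_{i=0}^m\lfloor\Delta_i(u)\rfloor V(y_i)$, so $\chi^{(u,w)}\mapsto t^w\chi^u$ identifies $A(C)_u$ with $H^0(\PP^m,\CO(D_u))\cdot\chi^u$. Since $L$ meets the torus, $L\not\subseteq V(y_i)$ for every $i$, so $D_u$ pulls back to $\sum_i\lfloor\Delta_i(u)\rfloor Q_i$ on $L$ and, under the analogous identification, $A(L)_u=H^0(L,\CO(D_u|_L))\cdot\chi^u$.

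Next I would build the map. The subscheme $T\times L^\circ$ of $T\times(\KK^*)^m$ is cut out by $\KK[M]\otimes I(L)$, so there is an $M$-graded surjection $\rho\colon\KK[M\times\ZZ^m]\to\KK[T\times L^\circ]$; restricting it to $A(C)$ gives an $M$-graded algebra homomorphism with kernel $A(C)\cap(\KK[M]\otimes I(L))$, which in degree $u$ is $\bigl(H^0(\PP^m,\CO(D_u))\cap I(L)\bigr)\chi^u$ when sections are viewed as Laurent polynomials in the $t_i$. I then claim this kernel is $\widetilde{I}(L)$, checked degree by degree: on one hand every chosen generator $f\chi^{u'}$ of $\widetilde{I}(L)$ lies in the kernel, and multiplying it by an arbitrary $h\chi^{u-u'}\in A(C)$ stays in the kernel because $fh\in I(L)$ and $fh\chi^u=(f\chi^{u'})(h\chi^{u-u'})\in A(C)$, giving $\widetilde{I}(L)_u\subseteq\bigl(H^0(\PP^m,\CO(D_u))\cap I(L)\bigr)\chi^u$; on the other hand any $f\in H^0(\PP^m,\CO(D_u))\cap I(L)$ is itself one of those generators (take $u'=u$), so the reverse inclusion holds.

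Finally I would check surjectivity onto $A(L)$. Since $L$ is a \emph{linear} subspace of $\PP^m$, its homogeneous coordinate ring is a quotient of $\KK[y_0,\dots,y_m]$ by linear forms, so $\KK[y]_d\twoheadrightarrow\KK[L]_d$ is surjective in every degree $d$; transporting this through the monomial identifications of the first step — which match $H^0(\PP^m,\CO(D_u))$ with $\KK[y]_d$ and $H^0(L,\CO(D_u|_L))$ with $\KK[L]_d$ for $d=\deg D_u$ — the induced map $A(C)_u\to A(L)_u$ is exactly this restriction and is therefore onto. Putting the pieces together, $0\to\widetilde{I}(L)_u\to A(C)_u\to A(L)_u\to 0$ is exact for each $u\in M\cap\sigma^\vee$; summing over $u$ yields $A(C)/\widetilde{I}(L)\cong A(L)$ as ($M$-graded) rings, equivalently a closed embedding $X(L)\hookrightarrow X(C)$ with ideal $\widetilde{I}(L)$, automatically $T$-equivariant since all the maps are $M$-graded.

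The routine but fiddly part — and the only real obstacle — is keeping the divisor and line-bundle identifications on $\PP^m$ and on $L$ consistent: checking that the restriction of sections $H^0(\PP^m,\CO(D_u))\to H^0(L,\CO(D_u|_L))$ really agrees, after the monomial bookkeeping, with the polynomial restriction $\KK[y]_d\to\KK[L]_d$, and dealing cleanly with the $\emptyset$-coefficient conventions and with the possibility that some of the points $Q_i$ coincide (in which case $H^0(L,\CO(D_u|_L))$ can be strictly larger than the span of the monomials $t^w$ satisfying the naive inequalities, which is why the surjectivity statement above, rather than a term-by-term comparison, is what one needs).
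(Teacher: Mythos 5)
Your proof is correct and takes essentially the same route as the paper: both identify $A(C)_u$ with $H^0(\PP^m,\CO(D_u))\cdot\chi^u$, realize the map $A(C)\to A(L)$ as restriction of sections (equivalently, restriction of the quotient onto $\KK[T\times L^\circ]$), and verify the kernel is $\widetilde{I}(L)$ degree by degree by observing that a section of $\CO(D_u)$ vanishing on $L$ is a Laurent polynomial in $I(L)$. The paper dismisses surjectivity as ``clear,'' whereas you supply the short argument via $\KK[y]_d\twoheadrightarrow\KK[L]_d$ for the linear subspace $L$; that added care is reasonable given the possibility that some $Q_i$ coincide, which you correctly flag as the one place where a naive monomial-by-monomial comparison would fail.
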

\begin{defn}
	Fix any rational affine complexity-one $T$-variety $X=X(\D)$. A \emph{semi-canonical embedding} of $X$ is an affine embedding induced by the embedding $X(L_\D)\subset X(C)$, composed with an equivariant affine embedding of $X(C)$, where $L_\D$ is any line in $\PP^m$ as in Remark \ref{rem:choice}.
\end{defn}
\begin{proof}[Proof of Theorem \ref{thm:embed}]
It is straightforward to check that 
\[
A(C)=\bigoplus_{u\in\sigma^\vee\cap M} H^0\left(\PP^m,\CO\left(\sum_i \lfloor \Delta_i(u) \rfloor \cdot V(y_i) \right)\right)\cdot \chi^u.
\]
We thus let the map $A(C)\to A(L)$ be the map induced by restriction of sections. For degree $u\in M$, the degree $u$ piece is just
\[
\begin{tikzcd}
 H^0\left(\PP^m,\CO\left(D \right)\right)
\arrow{r} &
 H^0\left(L,\CO_L\left(D \right)\right) 
\end{tikzcd}
\]
where
\[
 D=\sum_i \lfloor  \Delta_i(u) \rfloor \cdot V(y_i).
\]
This is clearly surjective.

It remains to check that the kernel of this map is the degree $u$ part of $\widetilde{I}(L)$. Note that $f\cdot \chi^{u}\in A(C)$ if and only if
\[ f\in H^0\left(\PP^m,\CO\left(D \right)\right).\]
Now, if we take such an $f$ that also vanishes on $L\cap (\KK^*)^m$, then clearly it restricts to zero on $L$, so $\widetilde{I}(L)$ is contained in the kernel. On the other hand, sections   
\[ f\in H^0\left(\PP^m,\CO\left(D \right)\right)\]
restricting to $0$ on $L$ are rational functions which vanish on $L$. Furthermore, since $D$ is supported outside the torus, they are regular on $(\KK^*)^m$. Hence, for such $f$, $f\cdot \chi^u \in \widetilde{I}(L)$ as desired.
\end{proof}

\begin{ex}[$D_6$ singularity]\label{ex:d6}
We consider $X$ to be a normal surface singularity of type $D_6$. This can be described by the polyhedral divisor 
\[\D=\Delta_0\cdot\{0\}+\Delta_1\cdot\{1\}+\Delta_2\cdot\{\infty\}\]
on $\PP^1$, where
\begin{align*}
\Delta_0=[3/2,\infty)\\
\Delta_1=[-1/2,\infty)\\
\Delta_2=[-1/2,\infty)
\end{align*}
The cone $C$ is generated by the columns of 
\[
\left(\begin{array}{c c c}
3 & -1 & -1\\
-2 & 2 & 0\\
-2 & 0 & 2
\end{array}\right)
\]
and the dual cone is generated by the columns of
\[
\left(\begin{array}{c c c}
2 & 2 & 2\\
1 & 1 & 2\\
1 & 2 & 1
\end{array}\right).
\]
A Hilbert basis for $C^\vee\cap(M\times\ZZ^2)$ is given by the columns of
\[
 \bgroup\begin{pmatrix}3&
      2&
      2&
      2\\
      2&
      1&
      2&
      1\\
      2&
      2&
      1&
      1\\
      \end{pmatrix}\egroup.\]
This means that the toric variety $X(C)$ is cut out by a single binomial: $x_1^2=x_2x_3x_4$.
This corresponds to the linear relation between the above Hilbert basis elements. 

Now, we can choose $L$ so that the ideal $I$ is generated by $f=\chi^{(1,0)}+\chi^{(0,1)}+1$.
To calculate $\widetilde{I}(L)$, we check: for which $(u,v_1,v_2)$ are the columns of 
\[
\left(\begin{array}{c c c}
u & u & u\\
v_1+1 & v_1 & v_1\\
v_2 &  v_2+1& v_2
\end{array}\right).
\]
in $C^\vee$? In this case, $\chi^{(2,2,1)}+\chi^{(2,1,2)}+\chi^{(2,1,1)}$ generates the ideal. 
Rewriting this in terms of variables corresponding to Hilbert basis elements, this becomes $x_2+x_3+x_4$. 
Hence, $X$ is embedded in $\Aff^4$ with ideal $\langle x_1^2-x_2x_3x_4,x_2+x_3+x_4\rangle$.
\end{ex}

\begin{rem}\label{rem:normal}
If all points $Q_i\in L$ are distinct, the construction of $X(L)$ coincides with that of $X(\D')$, where $\D'$ is the polyhedral divisor 
\[
\D'=\sum_i \Delta_i\cdot Q_i
\]
on $L$. In particular, such $X(L)$ is normal. 

On the other hand, if some points $Q_i$ coincide, $X(L)$ may no longer be normal. We say that the collection of polyhedra $\{\Delta_i\}$ (with tailcone $\sigma$) is \emph{admissible} for $L$ if for every $Q\in L$ either
\begin{enumerate}
\item  $\Delta_i=\emptyset$ for some $Q_i=Q$; or
\item for every $u\in \sigma^\vee$, $\Delta_i(u)$ is non-integral for at most one $i$ with $Q_i=Q$.
\end{enumerate}
Then just as in \cite[Proposition 4.2(iii)]{kstab}, $X(L)$ is normal if and only if $\{\Delta_i\}$ is admissible for $L$. See also \cite[Corollary 2.6]{tdef}.
\end{rem} 
\begin{rem}\label{rem:toric}
If $L$ intersects the boundary $V(y_0y_1\cdots y_m)$ of $\PP^m$ in exactly two points, then $X(L)$ is a (potentially non-normal) toric variety. Indeed, after reordering suppose that $Q_0,\ldots,Q_k$ coincide, as do $Q_{k+1},\ldots,Q_m$. After identifying these points respectively with $0,\infty\in\PP^1$, it follows that $A(L)$ is the semigroup algebra for the semigroup
\[
\left\{ (u,v)\in (\sigma^\vee\cap M)\times \ZZ\ \big |\  - \sum_{i\leq k} \lfloor \Delta_i(u) \rfloor \leq v \leq  \sum_{i\geq k+1} \lfloor \Delta_i(u) \rfloor \right\}.
\]

It is \emph{a priori} not obvious that this is a finitely generated semigroup. However, by our embedding $X(L)$ in $X(C)$, we see not only that the semigroup is finitely generated, but are even given generators for it, namely, the restriction of generators for the semigroup of $C^\vee$.
\end{rem}

\subsection{Similar Constructions}\label{sec:similar}
	Our embedding of $X(L)$ is related to two other constructions. The first is Altmann's construction of so-called toric deformations \cite{altmann:95a}. The setup is as follows: given a pointed polyhedral cone $\sigma\subset N_\QQ$ and an admissible collection $\{\Delta_1,\ldots,\Delta_m\}$ of polyhedra with tailcone $\sigma$ (see Remark \ref{rem:normal}), Altmann constructs an $(m-1)$-parameter deformation of the affine toric variety $X$ whose cone of one-parameter subgroups is generated by $\sigma \times 0$ and $\sum \Delta_i\times 1$ in $N_\QQ\times \QQ$. This is done by embedding $X$ as a relative complete intersection in the affine toric variety $\widetilde X$ whose cone of one-parameter subgroups is generated by $\sigma \times 0$ and $\Delta_i\times e_i$ in $N_\QQ\times \QQ^m$.
	The toric deformation of $X$ arises by perturbing the defining equations of $X$ in $\widetilde{X}$.
It is straightforward to verify that Altmann's embedding coincides with our embedding of $X(L)$ in $X(C)$ when we take as input $\Delta_0=\emptyset$, $\Delta_1,\ldots,\Delta_m$ as above, and $0=Q_1=\ldots=Q_m$, $\infty=Q_0$.

The second related construction is the embedding of a rational complexity-one $T$-variety in a toric variety obtained via their Cox rings. Hausen and S\"u\ss{} describe this embedding in \cite[Corollary 5.2]{hausen} for the case of complete A2 (e.g. projective) varieties, but it also makes sense for affine varieties with no non-constant invariant functions. We briefly recall this embedding and show that it coincides with our embedding of $X$ in $X(C)$.

Fix $X$ a rational complexity-one affine $T$-variety $X$ described by the polyhedral divisor \[\D=\sum_{i=0}^m \Delta_i P_i\] on $\PP^1$ as in \S\ref{sec:tvar}. The condition that $X$ have no non-constant invariant functions corresponds to the requirement that all coefficients of $\D$ are non-empty. Represent each point $P_i\in \PP^1$ by a choice of homogeneous coordinates $(p_{i0}:p_{i1})$.
The \emph{Cox ring} of $X$ has a presentation of the following form, see \cite[Theorem 1.3, Corollary 4.9]{hausen}:
\[
	\cox(X)=\KK[S_\rho,T_{i,v}\ |\ \rho\in\sigma^\times,0\leq i \leq m,v\in\Delta_i^{(0)}]/J
\]
where $\sigma^\times$ is the set of rays of $\sigma$ such that $\rho\cap \sum \Delta_i=\emptyset$, and $\Delta_i^{(0)}$ is the set of vertices of $\Delta_i$. The ideal $J$ is generated by the $3\times 3$ minors of
\[
\left(\begin{array}{c c c c}
	p_{00} & p_{10} & \cdots & p_{m0}\\ 
	p_{01} & p_{11} & \cdots & p_{m1}\\
	T_0 & T_1 & \cdots & T_m
\end{array}
	\right),
\]
where
\begin{align*}
	T_i=\prod_{v\in \Delta_i^{(0)}} 	T_{i,v}^{\mu(v)}
\end{align*}
and $\mu(v)$ denotes the smallest integer $\lambda$ such that $\lambda v\in N_\QQ$ is in $N$.
The $\Cl(X)$-grading on $\cox(X)$ is described by the exact sequence
\[
	\begin{CD}
		0 @>>> M\oplus \ZZ^m @>d>> \ZZ^{\sigma^\times}\bigoplus_i \ZZ^{\Delta_i^{(0)}} @>>> \Cl(X) @>>> 0\\
\end{CD}
\]
where the map $d$ sends
\[
	(u,a)\in M\oplus \ZZ^m\mapsto \sum_\rho  \langle \rho,u\rangle e_\rho + \sum_{i=0}^m\sum_{v\in\Delta_i^{(0)}} \mu(v)(a_i+\langle v,u\rangle)e_{i,v}.
\]
Here, $e_\rho$ and $e_{i,v}$ are the corresponding basis elements, $a_0:=-\sum_{i\neq 0} a_i$, and by abuse of notation we use $\rho$ to denote both a ray and its primitive lattice generator.

Taking the quotient of $\spec \cox(X)$ by the quasitorus $\spec \KK[\Cl(X)]$, we recover $X$. On the other hand, the quotient of the spectrum of the $\Cl(X)$-graded polynomial ring $\KK[S_\rho,T_{i,v}]$ by this quasitorus is an affine toric variety $\widetilde{X}$, see \cite[Chapter 5]{CLS} for details. Since $\spec \cox(X)\subset \spec \KK[S_\rho,T_{i,v}]$, we also obtain a closed embedding $X\subset \widetilde{X}$.

\begin{prop}
The embedding $X\subset \widetilde X$ obtained from the Cox ring of $X$ is the same as the embedding $X\subset X(C)$ described above.
\end{prop}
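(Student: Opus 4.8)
The plan is to show that the two embeddings $X\subset X(C)$ and $X\subset\widetilde X$ are identified by an isomorphism of ambient toric varieties that is compatible with the inclusions. Both ambient toric varieties are quotients of affine space by a quasitorus, or equivalently, are given by a cone; so the first step is to match the combinatorial data. On the Cox side, $\widetilde X$ is the toric variety associated to the lattice $M\oplus\ZZ^m$ (the kernel of the map $d$, up to the identification coming from the exact sequence) together with the effective cone cut out by the rays $e_\rho$ (for $\rho\in\sigma^\times$) and $e_{i,v}$ (for $v\in\Delta_i^{(0)}$); dualizing the map $d$, the character lattice of $\widetilde X$'s torus is $M\oplus\ZZ^m$ and the cone of one-parameter subgroups of $\widetilde X$ is generated by the images under $d^\vee$ of the standard generators. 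I would compute that these images are, up to the $\mu(v)$ scaling built into $d$, exactly the vectors $\sigma\times 0$ and $\{v\times e_i : v\in\Delta_i^{(0)}\}$ in $N_\QQ\times\QQ^m$ — which generate precisely the cone $C$ from \S\ref{sec:affine}. (One has to be slightly careful about the role of $\sigma^\times$ versus all of $\sigma$: rays $\rho$ of $\sigma$ meeting $\sum\Delta_i$ are already in the cone generated by the $\Delta_i\times e_i$, which is why Hausen--S\"uss can drop the corresponding variables, and this matches the fact that $C$ only needs $\sigma\times 0$ adjoined.) This identifies $\widetilde X\cong X(C)$ as toric varieties.

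Second, I would check that under this identification the subvariety $\spec\cox(X)\subset\spec\KK[S_\rho,T_{i,v}]$ descends to the same closed subscheme of $X(C)$ as $X(L_\D)$ does. The ideal $J$ on the Cox side is generated by the $3\times 3$ minors of the matrix with rows $(p_{i0}),(p_{i1}),(T_i)$; geometrically, $V(J)$ inside the torus is the locus where the point $(T_0:\cdots:T_m)$ lies on the line $L_\D\subset\PP^m$ parametrized by $\phi$ in Remark \ref{rem:choice} — indeed the $\phi$ of that remark has image cut out by exactly these minors (the condition that $(y_0:\cdots:y_m)$ be a $\KK$-linear combination $s_0(p_{i0})-s_1(p_{i1})$). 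So the ideal $I(L_\D)$ of $L_\D^\circ$ in $(\KK^*)^m$, pulled back along the monomial map $T_{i,v}\mapsto$ the corresponding character and then $T_i = \prod T_{i,v}^{\mu(v)}$, is precisely (the torus part of) $J$. By Theorem \ref{thm:embed}, the ideal of $X(L_\D)$ in $X(C)$ is $\widetilde I(L_\D)$, the saturation-type ideal generated by all $f\cdot\chi^u\in A(C)$ with $f\in I(L_\D)$; I would argue that this is exactly the restriction of $J$ to the Cox ring, i.e. $J$ defines $\spec\cox(X)$ and taking invariants / quotienting by the quasitorus produces $\widetilde I(L_\D)$. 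The cleanest formulation: both $\widetilde I(L_\D)$ and the image of $J$ are the kernel of the restriction map from $A(C)=\KK[C^\vee\cap(M\times\ZZ^m)]$ (equivalently the Cox-quotient ring) to $A(L_\D)$, so they coincide.

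The main obstacle I anticipate is bookkeeping around the $\mu(v)$-scaling and the vertex-versus-lattice-point distinction: the Cox construction uses variables $T_{i,v}$ indexed by \emph{vertices} $v$ of $\Delta_i$ with an exponent $\mu(v)$, whereas $X(C)$ is built directly from $C^\vee\cap(M\times\ZZ^m)$, and one must verify the monomial map $T_{i,v}\mapsto\chi^{(\ldots)}$ sending the Cox polynomial ring to (a localization of) $A(C)$ is the \emph{right} one — i.e. that it intertwines the $\Cl(X)$-grading with the $C$-structure correctly, so that the induced map on quotients is an isomorphism and not merely a dominant map. Concretely I would trace through the map $d$: its cokernel is $\Cl(X)$ and its \emph{image} being saturated (which holds since $X$ is assumed to have a free divisor class group in the Cox picture, or one passes to the appropriate quasitorus otherwise) guarantees the quotient of affine space by $\spec\KK[\Cl(X)]$ has the cone dual to $d^\vee$ applied to the positive orthant — and this is $C$. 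Once that identification is pinned down, compatibility of the subvarieties is a formal consequence of both being defined by the kernel of restriction to $A(L_\D)$, so the remaining work is purely the toric-variety identification. A final remark worth including: this also re-proves, from the Hausen--S\"uss side, that $\widetilde I(L_\D)$ is generated by the ``de-homogenized minors'' appearing in $J$, giving an alternative to the generators-by-Hilbert-basis description used in Example \ref{ex:d6}.
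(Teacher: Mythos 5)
Your proposal matches the paper's proof in both substance and structure: you identify the ambient toric varieties by matching the Cox/quotient data (the paper phrases this as computing the Cox ring of $X(C)$ and observing it equals $\KK[S_\rho,T_{i,v}]$ with the same $\Cl(X)$-grading, which is the dual of your $d^\vee$-on-cones computation), and you identify the subvarieties by observing that the $3\times3$ minors cut out the line and the monomial map sends $y_i\mapsto T_i$. The one place the paper is tighter is the subvariety step: rather than reasoning about ``the image of $J$'' in $A(C)$ (which is really $J\cap A(C)$, and needs care because taking degree-zero parts doesn't commute with quotients), the paper simply invokes integrality of $X$ to reduce the whole check to the open torus, where it becomes the elementary statement that $I(L)$ maps to $J$ under $y_i\mapsto T_i$.
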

\begin{proof}
	We begin by considering the Cox ring of $X(C)$, which is a polynomial ring whose free generators correspond to rays of $X(C)$. By construction of $C$, these rays are in bijection with elements of $\sigma^\times$ and elements of $\Delta_i^{(0)}$. Hence, the Cox ring of $X(C)$ is naturally isomorphic to $\KK[S_\rho,T_{i,v}]$ \cite[\S 5.1]{CLS}. In fact, this isomorphism preserves the $\Cl(X)$ grading, since the grading on the Cox ring of $X(C)$ is also induced by the map $d$ above. It follows that $\widetilde {X}$ and $X(C)$ are naturally isomorphic, since both arise as the same quotient of  $\spec \KK[S_\rho,T_{i,v}]$.

	It remains to check that this isomorphism of $\widetilde{X}$ and $X(C)$ maps $X$ to $X$. Since $X$ is integral, we only need to check this on the open torus. For this, it suffices to show that the image of $I(L)$ in $\KK[S_\rho^{\pm 1},T_{i,v}^{\pm 1}]$ agrees with $J$. But the ideal of $L$ in $\PP^m$ is generated by the $3\times 3$ minors of
\[
\left(\begin{array}{c c c c}
	p_{00} & p_{10} & \cdots & p_{m0}\\ 
	p_{01} & p_{11} & \cdots & p_{m1}\\
	y_0 & y_1 & \cdots & y_m
\end{array}
	\right)
\]
for homogeneous coordinates $y_i$ on $\PP^m$, and the inclusion of $\KK [y_i/y_j]\to \KK[S_\rho^{\pm 1},T_{i,v}^{\pm 1}]$ is induced by the map sending $y_i$ to $T_i$. The claim then follows.
\end{proof}

\subsection{Projective $T$-Varieties}
We are also interested in studying embeddings of projective $T$-varieties. Let $X$ be a projective $T$-variety, and $\mcL$ any ample line bundle. Then the ring
\[
R(\mcL)=\bigoplus_{i\in\ZZ_{\geq 0}} H^0(X,\mcL^{\otimes i})
\]
is a finitely generated normal domain, and $\proj R(\mcL)=X$. Choosing homogeneous generators for $R(\mcL)$ as a $\KK$-algebra of degrees $d_0,d_1,\ldots,d_n$ leads to a presentation
\[
\begin{CD}
0 @>>> I @>>> \KK[x_0, \ldots, x_n] @>>> R(\mcL) @>>> 0\\
\end{CD}
\]
and hence an embedding of $X$ in the weighted projective space $\PP(d_0,d_1,\ldots,d_n)$. 

On the other hand, $\spec R(\mcL)$ is an affine $T\times\KK^*$-variety. If we assume that $X$ was rational and of complexity one, then so is $\spec R(\mcL)$, and we have a semi-canonical presentation of $R(\mcL)$ via the semi-canonical embedding $\spec R(\mcL)\subset X(C)$ of \S\ref{sec:affine}. Thus, after choosing the line bundle $\mcL$, we have a semi-canonical embedding of the polarized pair $(X,\mcL)$ in some weighted projective space.

\begin{ex}[The projectivized cotangent bundle on $\PP^2$]\label{ex:proj}
We consider $X=\PP(\Omega_{\PP^2})$, the projectivization of the cotangent bundle on $\PP^2$. This is a Fano threefold often called $W$, and equal to number 2.32 in the list of Mori and Mukai \cite{mori}. It is also isomorphic to the variety of complete flags in $\KK^3$. This variety is equipped with a natural two-torus action.

On $X$, the anticanonical class is divisible by two. We take $\mcL$ to be half the anticanonical bundle and consider the polarised pair $(X,\mcL)$. Using the data found in \cite{picturebook}, one determines that $\spec R(\mcL)$ is encoded by the polyhedral divisor 
\[\D=\Delta_0\cdot\{0\}+\Delta_1\cdot\{1\}+\Delta_2\cdot\{\infty\}\]
on $\PP^1$, where the vertices of $\Delta_0$ are $(1,0,0)$, $(0,0,0)$, the vertices of $\Delta_1$ are $(0,1,0)$, $(0,0,0)$, the vertices of $\Delta_2$ are $(0,0,1)$, $(-1,-1,1)$, and all coefficients have tailcone generated by the columns of 
\[
\left(\begin{array}{c c c c c c}
1 & 1 & 0 & 0 & -1 & -1\\
0 & 1 & 1 & -1 & -1 & 0\\
1 & 1 & 1 & 1 & 1 & 1
\end{array}\right).
\]
The distinguished $\KK^*$-action on $\spec R(\mcL)$ corresponds to the co-character $(0,0,1)$.

The dual cone $C^\vee$ is generated by the columns of 
\[\bgroup\begin{pmatrix}0&
      1&
      0&
      0&
      {-1}&
      {-1}&
      0&
      0&
      1\\
      0&
      0&
      1&
      0&
      0&
      1&
      0&
      {-1}&
      {-1}\\
      1&
      1&
      1&
      1&
      1&
      1&
      1&
      1&
      1\\
      0&
      0&
      0&
      0&
      0&
      0&
      1&
      1&
      1\\
      0&
      0&
      0&
      {-1}&
      {-1}&
      {-1}&
      {-1}&
      {-1}&
      {-1}\\
      \end{pmatrix}\egroup\]
These elements also form a Hilbert basis for the semigroup $C^\vee\cap(M\times\ZZ^m)$. Note that all elements have degree one with respect to our $\ZZ$-grading, so we obtain an embedding of the Fano threefold $X$ in $\PP^8$.

Concretely, the corresponding toric ideal is generated by the $2\times 2$ minors of
\[
\left(\begin{array}{c c c}
x_0&x_4&x_7\\
x_1&x_3&x_8\\
x_2&x_5&x_6
\end{array}\right)
\]
and we recognize that $X(C)$ is simply the (cone over the) Segre embedding of $\PP^2\times \PP^2$. The variety $X$ is further cut out by the additional equation $x_0+x_3+x_6=0$. 
\end{ex}
\begin{rem}
In general, the above construction produces a polarized pair $(X,\mcL)$ in a (potentially weighted) projective space. If we wish to embed $X$ in a standard projective space, we may then pass to the $i$th Veronese subring for appropriate choice of $i$. It is straightforward to check that the construction of \S \ref{sec:affine} commutes with passing to Veronese subrings. 

This means that the construction of \S\ref{sec:affine} applied to $\spec R(\mcL^{\otimes i})$ will yield a semi-canonical embedding of the polarized pair $(X,\mcL^{\otimes i})$ in projective space.
\end{rem}
\begin{rem}\label{rem:cox}
One might also be interested in the total coordinate or \emph{Cox ring} $\cox(X)$ of a rational, complexity-one projective $T$-variety $X$. See \cite{coxrings} for details on Cox rings.

The variety $\spec \cox(X)$ is itself a complexity-one affine $T$-variety, see \cite{pcox} for a description of the corresponding polyhedral divisor. As long as $X$ has only log terminal singularities, $\spec \cox(X)$ will also be rational, see \cite[Corollary 5.11]{iteration}. In these cases, we may apply \S \ref{sec:affine} to produce a semi-canonical presentation for the ring $\cox(X)$. 
See also \S\ref{sec:similar} for further connections with Cox rings.
\end{rem}

\section{The Ideal of $X(L)$}\label{sec:ideal}
In this section, we study the ideal $\widetilde{I}(L)$ of $X(L)$ in the embedding constructed in \S\ref{sec:affine}. In particular, we determine explicitly its generators. This will be important in the subsequent \S\ref{sec:trop}, in which we show that each inital ideal corresponding to a point of $\trop(X(L)^\circ)$ is prime.

We continue using notation as in \S\ref{sec:affine}.
To begin with, we have the following:
\begin{lemma}\label{lemma:preimage}
Let $\pi:\spec \KK[M]\times (\KK^*)^m\to (\KK^*)^m$ be the projection.
Then $\widetilde{I}(L)$ is the ideal of the closure of $\pi^{-1}(L\cap (\KK^*)^m)$ in $X(C)$.
\end{lemma}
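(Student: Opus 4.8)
The plan is to identify the domain $\spec\KK[M]\times(\KK^*)^m$ of $\pi$ with the dense torus $T_C:=\spec\KK[M\times\ZZ^m]$ of the affine toric variety $X(C)=\spec A(C)$, and then to prove directly the identity of ideals
\[
\widetilde I(L)=J\cap A(C),
\]
where $J\subset\KK[M\times\ZZ^m]$ is the ideal of $\pi^{-1}(L^\circ)$ inside $T_C$; the lemma follows from this.

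First I would record the elementary geometric points. Since $L^\circ=L\cap(\KK^*)^m$ is closed in $(\KK^*)^m$, the preimage $\pi^{-1}(L^\circ)=\spec\KK[M]\times L^\circ$ is closed in $T_C$, with ideal exactly $J:=I(L)\cdot\KK[M\times\ZZ^m]$. As $C$ is pointed, $C^\vee$ is full-dimensional, so the inclusion $A(C)\hookrightarrow\KK[M\times\ZZ^m]$ exhibits $T_C$ as a dense open subset of $X(C)$ with the same function field. Consequently the scheme-theoretic closure of $\pi^{-1}(L^\circ)$ in $X(C)$ is $\spec\bigl(A(C)/(J\cap A(C))\bigr)$: indeed $A(C)/(J\cap A(C))$ injects into $\KK[M\times\ZZ^m]/J$, so its spectrum is the smallest closed subscheme of $X(C)$ through which $\pi^{-1}(L^\circ)\hookrightarrow X(C)$ factors. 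Thus the lemma reduces to the identity $\widetilde I(L)=J\cap A(C)$.

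For this identity I would exploit the $M$-grading recording the $T$-action. The subring $A(C)\subset\KK[M\times\ZZ^m]$ is $M$-graded, with $A(C)_u=A(C)\cap\bigl(\chi^u\cdot\KK[\ZZ^m]\bigr)$, and since $I(L)$ lies in $M$-degree $0$, the ideal $J$ is $M$-graded as well, namely $J=\bigoplus_{u\in M}\chi^u\cdot I(L)$. Intersecting grade by grade,
\[
J\cap A(C)=\bigoplus_{u\in M}\bigl(\chi^u\cdot I(L)\cap A(C)_u\bigr)=\bigoplus_{u\in M}\{\,f\chi^u\ :\ f\in I(L),\ f\chi^u\in A(C)\,\}.
\]
The union over $u\in M$ of the homogeneous pieces on the right is precisely the generating set used in the definition of $\widetilde I(L)$; since these pieces already assemble into the ideal $J\cap A(C)$, the ideal they generate equals $J\cap A(C)$. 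Hence $\widetilde I(L)=J\cap A(C)$.

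I do not expect a genuine obstacle: the argument is bookkeeping with the $M$-grading together with the standard fact that closures in an affine toric variety correspond to contractions of ideals along $A(C)\hookrightarrow\KK[M\times\ZZ^m]$. As a consistency check, $I(L)$ is prime and $\KK$ is algebraically closed, so $J$ is prime and hence so is $J\cap A(C)=\widetilde I(L)$; combined with Theorem \ref{thm:embed}, this also shows $X(L)=\overline{\pi^{-1}(L^\circ)}$.
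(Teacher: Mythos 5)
Your proof is correct and follows essentially the same route as the paper: identify the ideal $J$ of $\pi^{-1}(L^\circ)$ in the big torus as the extension of $I(L)$, note that the ideal of the closure in $X(C)$ is the contraction $J\cap A(C)$, and then use the $M$-grading to see that $J\cap A(C)$ decomposes into exactly the homogeneous pieces $\{f\chi^u : f\in I(L),\, f\chi^u\in A(C)\}$ that generate $\widetilde I(L)$. Your version is a bit more explicit about why the contraction computes the scheme-theoretic closure, but the core argument is the same.
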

\begin{proof}
The ideal $J$ of $\pi^{-1}(L\cap(\KK^*)^m)$ in $\spec \KK[M]\times (\KK^*)^m$ is generated by the image of $I(L)$ in $\KK[M\times\ZZ^m]$. The ideal $J'$ of the closure in $X(C)$ consists of those $g\in J$ such that $g$ is regular on $X(C)$, that is, $g\in A(C)$. Clearly, $\widetilde{I}(L)$ is contained in $J'$.

On the other hand, $g\in J$ is in $A(C)$ if and only if each $M$-graded piece $g_u$ is. But $g_u$ is itself of the form $f\cdot \chi^{u}$ for some $f\in I(L)$, hence $J'$ is contained in $\widetilde{I}(L)$.

\end{proof}
\noindent Geometrically, the above lemma means that the intersection $X^\circ=X\cap(\KK^*)^n$ is just the product of $L^\circ=L\cap(\KK^*)^m$ with the torus $T$.

We are now interested in finding generators for $\widetilde{I}(L)$. 
We set $z_i=y_i/y_0$ for $i=0,\ldots, m$. 
\begin{rem}\label{rem:inac}
Fix a multidegree $u\in M$. 
For any Laurent monomial $z^v\in \KK[z_1^{\pm 1},\ldots,z_m^{\pm 1}]$, $z^v\cdot \chi^u$ is in $A(C)$ if and only if
\begin{align*}
v_i&\geq -\Delta_i(u)\qquad i>0; \ \textrm{and}\\
\sum_{i=1}^m v_i &\leq \Delta_0(u).
\end{align*} 
\end{rem}

Let $\G(L)$ be any set of generators for $I(L)$ which are linear in the $z_i$. 
The other ingredient we need is the polytope $P\subset M_\QQ\times \QQ^m$ consisting of those $(u,v)$ defined by the inequalities
\begin{align*}
v_i&\geq -\Delta_i(u)\qquad i> 0; \ \textrm{and}\\
\sum_{i=1}^m v_i &\leq \Delta_0(u)-1.
\end{align*} 
A set of \emph{lattice generators for $P$ as a $C^\vee$-module} is any set $\mcP\subset P\cap (M\times\ZZ^m)$ such that every lattice point of $\mcP$ is a sum of an element of $\mcP$ with a lattice point of $C^\vee$.

\begin{rem}
 A finite set $\mcP$ of lattice generators for $P$ may be computed by considering a Hilbert basis $\H$ for the cone
\[
\overline{\QQ_{\geq 0}(P\times \{1\})}.
\]
Selecting all elements of $\H$ with final coordinate equal to one, and projecting back to $M\times\ZZ^m$ leads to such a generating set $\mcP$.
\end{rem}

By construction, we know that $\widetilde{I}(L)$ is generated by certain elements of the form $f\cdot \chi^u$, where $f$ is a rational function on $\PP^m$ and $\chi^u$ is a character of $T$. We now specify precisely what form these elements $f$ and $\chi^u$ take:
\begin{prop}\label{prop:ideal}
The ideal $\widetilde{I}(L)$ is generated by the functions $(gz^v)\cdot \chi^{u}$, where $g\in \G(L)$ and $(u,v)\in \mcP$. 
\end{prop}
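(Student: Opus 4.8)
The plan is to show the two inclusions. First, each generator $(gz^v)\cdot\chi^u$ with $g\in\G(L)$ and $(u,v)\in\mcP$ genuinely lies in $\widetilde{I}(L)$: since $g\in I(L)$ and $gz^v$ is a linear combination of Laurent monomials $z^{v'}$ with $v'$ ranging over $v$ plus the support exponents of $g$, I would check using Remark \ref{rem:inac} that each such $z^{v'}\cdot\chi^u$ lies in $A(C)$. The defining inequalities of $P$ are exactly those of $A(C)$ but with $\Delta_0(u)$ replaced by $\Delta_0(u)-1$; since $g$ is linear in the $z_i$, the monomials appearing in $gz^v$ have exponent vectors either equal to $v$ or equal to $v+e_j$ for some $j$ (or, if $g$ has a constant term, possibly $v$ again, and if it involves $z_0=1$ this is just $v$). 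In all cases the $i>0$ inequalities are preserved or improved, and $\sum v'_i\leq \sum v_i + 1\leq \Delta_0(u)$, so $z^{v'}\cdot\chi^u\in A(C)$, hence $gz^v\cdot\chi^u\in A(C)$. Combined with $g\in I(L)$, the definition of $\widetilde{I}(L)$ gives $(gz^v)\cdot\chi^u\in\widetilde{I}(L)$.

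For the reverse inclusion, I would take an arbitrary generator $f\cdot\chi^u\in\widetilde{I}(L)$, so $f\in I(L)$ and $f\cdot\chi^u\in A(C)$, and show it lies in the ideal generated by the proposed elements. Writing $f$ in terms of the $z_i$, and using that $\G(L)$ generates $I(L)$ by elements linear in the $z_i$, we have $f=\sum_k h_k g_k$ for $g_k\in\G(L)$ and Laurent polynomials $h_k$. After expanding each $h_k$ into Laurent monomials $z^{v}$, it suffices to treat a single term: reduce to showing that whenever $(gz^v)\cdot\chi^u\in A(C)$ for $g\in\G(L)$, the element $(gz^v)\cdot\chi^u$ lies in the ideal generated by the $(g z^{v''})\cdot\chi^{u}$ with $(u,v'')\in\mcP$. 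The point is that $(gz^v)\cdot\chi^u\in A(C)$ forces $(u,v)$ — or more precisely the "translated" exponent that witnesses membership — to lie in $P$: indeed if $gz^v\cdot\chi^u\in A(C)$ then every monomial $z^{v+e_j}$ (for $j$ in the support of $g$, which includes at least two indices since $g$ is a nonzero linear form vanishing on a line through the torus, hence not a monomial) satisfies $\sum(v+e_j)_i\leq\Delta_0(u)$, i.e. $\sum v_i\leq \Delta_0(u)-1$, and the lower inequalities $v_i\geq -\Delta_i(u)$ hold as well; so $(u,v)\in P\cap(M\times\ZZ^m)$. Then by definition of $\mcP$ as lattice generators for $P$ as a $C^\vee$-module, $(u,v)=(u',v')+(u'',v'')$ with $(u',v')\in\mcP$ and $(u'',v'')\in C^\vee\cap(M\times\ZZ^m)$, so $(gz^v)\cdot\chi^u=(gz^{v'})\cdot\chi^{u'}\cdot \chi^{(u'',v'')}$, exhibiting it as a ring multiple of a proposed generator since $\chi^{(u'',v'')}\in A(C)$.

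The main subtlety — and the step I'd expect to require the most care — is the bookkeeping around the support of $g$ and the case where $g$ has a constant term or involves $z_0$ (recall $z_0=y_0/y_0=1$, so a generator linear in the $z_i$ may actually be an affine-linear form in $z_1,\dots,z_m$). In that case the "$\sum v'_i\leq\Delta_0(u)$" bound for the constant term of $g z^v$ reads $\sum v_i\leq\Delta_0(u)$, which is weaker than the $P$-inequality $\sum v_i\leq\Delta_0(u)-1$, so a priori $(u,v)$ need only lie in $A(C)$, not in $P$. The resolution is that the $z_0=1$ term is a legitimate monomial of $A(C)$ whose presence is exactly what the "$-1$" shift in the definition of $P$ accounts for: the relevant constraint for membership of $gz^v\cdot\chi^u$ in $A(C)$ is that the \emph{largest} exponent sum among monomials of $gz^v$ be $\leq\Delta_0(u)$, and since $g$ is genuinely nonconstant (its zero set is a line meeting the torus, so it has at least one monomial $z_j$ with $j>0$), that largest sum is $\sum v_i+1$, forcing $\sum v_i\leq\Delta_0(u)-1$ as needed. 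Thus one should phrase the argument via: $gz^v\cdot\chi^u\in A(C)$ $\iff$ all its monomials are in $A(C)$ $\iff$ $(u,v)\in P$, using that $g$ involves some $z_j$, $j>0$; after that the $C^\vee$-module generation by $\mcP$ closes the argument cleanly.
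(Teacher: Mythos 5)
Your overall strategy matches the paper's (show the two inclusions, then close the argument by factoring $(u,v)=(u',v')+(u'',v'')$ with $(u',v')\in\mcP$, $(u'',v'')\in C^\vee$), and your first-paragraph verification that $(gz^v)\cdot\chi^u\in\widetilde{I}(L)$ for $(u,v)\in\mcP$ is correct and essentially identical to the paper's. However, there is a genuine gap in your reduction in the reverse inclusion. You write $f=\sum_k h_k g_k$ with $h_k$ arbitrary Laurent polynomials and then claim ``after expanding each $h_k$ into Laurent monomials $z^v$, it suffices to treat a single term.'' But after expansion, the individual terms $(g_k z^v)\cdot\chi^u$ need not lie in $A(C)$ at all: cancellation between terms of the sum can make the total lie in $A(C)$ while individual summands have exponents violating the defining inequalities of $C^\vee$. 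Such terms cannot lie in any ideal of $A(C)$, so the reduction to ``whenever $(gz^v)\cdot\chi^u\in A(C)$'' does not actually cover all summands, and the argument does not close.

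The paper avoids this by a degree-bound argument which you omit. It first notes that the degree-$u$ piece of $A(C)$ is $g_u\cdot\chi^u\cdot\KK[z_1,\ldots,z_m]_{\leq d_u}$ where $g_u=z_1^{-\lfloor\Delta_1(u)\rfloor}\cdots z_m^{-\lfloor\Delta_m(u)\rfloor}$ and $d_u=\sum_i\lfloor\Delta_i(u)\rfloor$, so a homogeneous element of $\widetilde{I}(L)$ is $g_u\,p\cdot\chi^u$ with $p\in I(L)\cap\KK[z_1,\ldots,z_m]$ of degree at most $d_u$. Because the $g\in\G(L)$ are (affine-)linear and generate this ideal, one may choose a decomposition $p=\sum c_g g$ with each $c_g g\in\KK[z_1,\ldots,z_m]_{\leq d_u}$, hence each $c_g$ of degree at most $d_u-1$. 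Expanding $c_g$ into monomials $z^\alpha$ then produces terms $g\,z^{\alpha-\lfloor\Delta\rfloor}\cdot\chi^u$ with $(u,\alpha-\lfloor\Delta\rfloor)\in P$ by direct inspection of the inequalities, with no appeal needed to the implication you prove. Your detailed third-paragraph argument that $(gz^v)\cdot\chi^u\in A(C)$ forces $(u,v)\in P$ (using that $g$ has at least two monomials, one of which involves some $z_j$ with $j>0$) is correct as a standalone observation, but it does not substitute for the missing degree control on the coefficients $h_k$; you still need to establish that a decomposition exists whose expanded terms all land in $A(C)$ in the first place.
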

\begin{proof}
Fix a multidegree $u\in M$. 
It follows from Remark \ref{rem:inac} that the degree $u$ piece of $A(C)$ is 
\begin{align*}
g_u\cdot \chi^{u}\cdot \KK[z_1,\ldots,z_m]_{\leq d_u}\\
\end{align*}
where
\begin{align*}
g_u&=z_1^{-\lfloor \Delta_1(u)\rfloor}\cdots z_m^{-\lfloor \Delta_m(u)\rfloor}\\
d_u&= \sum_i\lfloor \Delta_i(u)\rfloor.\nonumber
\end{align*}

The definition of $P$ is such that $(u,v)\in P$ if and only if 
\[z^v\cdot \chi^u,(z_1 z^v)\cdot\chi^u,\ldots,(z_m z^v)\cdot\chi^u\]
 are all in $A(C)$. In particular, for any $g\in \G(L)$, $g\cdot z^v \cdot\chi^{u}\in \widetilde{I}(L)$ for each $(u,v)\in\mcP$, since $g$ is linear in the $z_i$. 

We now need to show that these elements generate all of $\widetilde{I}(L)$.
Consider any homogeneous element in $\widetilde{I}(L)\subset A(C)$, which (by the above) we may write as $f g_u\cdot  \chi^u$, with $f\in \KK[z_1,\ldots,z_m]_{\leq d_u}$.
The polynomial $f$ is in $I(L)\cap \KK[z_1,\ldots,z_m]$. Using that the $g\in \G(L)$ are linear and generate $I(L)$, we may write 
\[
f=\sum_{g\in\G(L)} c_g g
\]
with all $c_g g \in\KK[z_1,\ldots,z_m]_{\leq d_u}$.

Hence, we have reduced to showing that we can generate any element of the form $g  z^v\cdot \chi^{u}$ such that $g z^v\cdot  \chi^{u}\in A(C)$, that is, $(u,v)\in P$. Now, any $(u,v)\in P$ can be written as $(u',v')+(u'',v'')$, where $(u',v')\in \mcP$ and $(u'',v'')\in C^\vee$. Thus, we can generate $g z^v\cdot \chi^{u}$ as
\[
g z^v\cdot\chi^{u}=(z^{v''}\cdot\chi^{u''})(g  z^{v'}\cdot \chi^{u'}),
\]
noting that $z^{v''}\cdot \chi^{u''}$ is a regular function.
\end{proof}

\section{Tropicalization and Gr\"obner Theory}\label{sec:groebner}

\subsection{Basics}\label{sec:tropical}

Here we review the elements of Gr\"obner theory and tropical geometry necessary for our results.  We recommend the books \cite{CLO,Sturmfels-GBCP,tropical} as references.  We will be working in the polynomial ring $\KK[\bx]=\KK[x_1, \ldots, x_n]$ and the corresponding ring of Laurent polynomials $\KK[\bx^{\pm1}]=\KK[x_1^{\pm 1},\ldots,x_n^{\pm 1}]$.

Let $f = \sum c_{\alpha}x^{\alpha} \in \KK[\bx^{\pm1}]$ and $w \in \RR^n$. The \emph{initial form} $\In_w(f)$ of $f$ with respect to $w\in\RR^n$ is
\[
\In_w(f)=\sum_{\substack{\alpha\\ \alpha\cdot w\ \textrm{minimal}}} c_\alpha x^\alpha.
\]
For an ideal $I$ in $\KK[\bx]$ or $\KK[\bx^{\pm 1}]$, the \emph{initial ideal} $\In_w(I)$ is the ideal generated by the set $\{\In_w(f) \mid f \in I\}$.

\begin{defn}
Let $X\subset \Aff^n$ be an affine variety intersecting the torus with ideal $I$, and $I^\circ$ the ideal of $X^\circ=X\cap(\KK^*)^n$. The \emph{tropicalization} $\trop(X^\circ)$ of $X^\circ$ is the set of those $w\in \RR^n$ such that $\In_w(I^\circ)$ does not contain a monomial.
\end{defn}

\begin{rem}
It is straightforward to check that $\In_w(I^\circ)$ does not contain a monomial if and only if the same holds true for $\In_w(I)$.
\end{rem}

The tropical variety $\trop(X^\circ)$ is typically given the structure of a polyhedral fan with the following property: if two elements $w, w' \in \trop(X^\circ)$ are in the interior of the same face, then $\In_w(I^\circ)=\In_{w'}(I^\circ)$. The structure theorem for tropical varieties \cite[Theorem 3.3.6]{tropical} says that $\trop(X^\circ)$ has dimension $d=\dim X$ and is connected in codimension $1$.  This means that any two maximal dimensional faces $C, C' \subset \trop(X^\circ)$ can be connected by a path which is contained in the relative interiors of $d$ and $(d-1)$-dimensional cones.

Since we are interested in degenerations not of $X^\circ$ but of $X$, we consider a polyhedral structure so that if $w, w' \in \trop(X^\circ) \subset \RR^n$ belong to the interior of the same cone then $\In_w(I) = \In_{w'}(I)$.
This can be taken as a refinement of the above polyhedral structure, since it is straightforward to check that 
 $\In_w(I)=\In_{w'}(I)$ implies $\In_w(I^\circ)=\In_{w'}(I^\circ)$.
 Note that for homogeneous ideals $I$, this polyhedral structure on $\trop(X^\circ)$ is exactly that induced by the so-called \emph{Gr\"obner fan}. For non-homogeneous ideals, one may consider the homogenization of $I$ in $\KK[x_0,\ldots,x_n]$. Intersecting the Gr\"obner fan for this ideal in $\RR^{n+1}$ with the $0$th coordinate hyperplane yields a fan $\Sigma$ in $\RR^n$ inducing the desired polyhedral structure on $\trop(X^\circ)$. Indeed, \cite[Proposition 3.2.8]{tropical} shows $\trop(X^\circ)$ has the structure of a closed subfan of $\Sigma$. Furthermore, \cite[Proposition 2.6.1]{tropical} shows that any two elements in the interior of a common cone of $\Sigma$ induce the same initial ideal of $I$.

 \begin{defn}
We call a cone of $\trop(X^\circ)$ \emph{prime} if the initial ideal corresponding to an interior weight vector is prime.
\end{defn}
\noindent It follows that the embedding of $X$ is well-poised exactly when every cone of $\trop(X^\circ)$ is a prime cone.

In order to effectively compute the tropicalization of a variety, one desires a (finite) {tropical basis}:
\begin{defn}
A \emph{tropical basis} for $X^\circ$ is a  subset $G \subset I^\circ$ such that \[\trop(X^\circ) = \bigcap_{f \in G} \trop(V(f)).\]
\end{defn}
\noindent 
For any $G\subset I^\circ$, $\trop(X^\circ)$ is always contained in $\bigcap_{f \in G} \trop(V(f))$, but even if $G$ generates $I^\circ$, equality might not hold. However, every  $X^\circ\subset(\KK^*)^n$ does possess a finite tropical basis, see \cite[Theorem 2.6.5]{tropical}.

\subsection{Tropicalization of Semi-Canonical Embeddings}\label{sec:trop}
We now assume that we are in the situation of \S \ref{sec:affine}: we have fixed $m+1$ polyhedra $\Delta_i$ with pointed tailcone $\sigma$, along with a line $L\subset \PP^m$ intersecting the dense torus. By Theorem \ref{thm:embed}, we have an embedding of the affine variety $X(L)$ in the toric variety $X(C)$ with ideal $\widetilde{I}(L)$. 

Fixing a generating set $H$ of $C^\vee\cap (M\times\ZZ^m)$ of size $n$ leads to an embedding of $X(C)$ and thus $X(L)$ into $\Aff^{n}$.  We consider the polynomial ring
\[
S=\KK[x_u\ |\ u\in H ].
\]
Let $J(L)$ be the ideal in $S$ of $X(L)$ under this embedding. We call the images of the $x_u$ for $u\in H$ the \emph{semi-canonical generators} of $A(L)$. 

We are interested in describing $\trop(X(L)^\circ)$, where $X(L)^\circ=X(L)\cap(\KK^*)^n$.
\begin{prop}\label{prop:tropmap}
The tropicalization $\trop(X(L)^\circ)$ is the image of $N_\RR\times\trop(L^\circ)$ in $\RR^{\#H}$ under the injective linear map
\begin{align*}
\phi:(N\oplus\ZZ^m)_\RR &\to \RR^{\#H}\\
v&\mapsto (\ldots,\langle v,u\rangle,\ldots)_{u\in H}.
\end{align*}
In particular, $\trop(X(L)^\circ)$ is the product of a tropical line with a plane of dimension $(\rank N)$.
\end{prop}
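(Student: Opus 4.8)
The plan is to describe $\trop(X(L)^\circ)$ directly from the geometric description of $X(L)^\circ$ given by Lemma \ref{lemma:preimage}, namely that $X(L)^\circ \cong T \times L^\circ$ as a subvariety of $(\KK^*)^n$, and then transport this through the monomial map $\phi$ coming from the generating set $H$ of $C^\vee \cap (M\times\ZZ^m)$. First I would recall that tropicalization is functorial with respect to monomial maps: if $\psi\colon (\KK^*)^a \to (\KK^*)^b$ is induced by a linear map $A\colon \ZZ^b \to \ZZ^a$ of character lattices, then for a subvariety $Z \subset (\KK^*)^a$ one has $\trop(\overline{\psi(Z)}) = \overline{A^\vee(\trop Z)}$ where $A^\vee$ is the dual map (this is \cite[Corollary 3.2.13]{tropical} or the Sturmfels--Tevelev multiplicities statement at the level of sets). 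The embedding $X(L) \hookrightarrow \Aff^n$ factors as: the inclusion $X(L)^\circ \hookrightarrow T\times (\KK^*)^m$, which is just $\id_T \times (L^\circ \hookrightarrow (\KK^*)^m)$, followed by the monomial map $T\times(\KK^*)^m = \spec\KK[M\oplus\ZZ^m] \to (\KK^*)^n$ dual to the inclusion $H \hookrightarrow M\oplus\ZZ^m$, i.e.\ to the linear map $\ZZ^n \to M\oplus\ZZ^m$ sending the $u$-th basis vector to $u\in H$. The dual of that linear map is exactly $\phi$ as written.

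Next I would assemble the pieces. Tropicalization takes products to products: $\trop\big((T\times L^\circ)^\circ\big) = \trop(T) \times \trop(L^\circ) = N_\RR \times \trop(L^\circ)$, using that $\trop$ of the full torus $T$ is all of $N_\RR$. Applying functoriality under the monomial map with linear part $\phi$ then gives $\trop(X(L)^\circ) = \phi\big(N_\RR \times \trop(L^\circ)\big)$. Here one uses Lemma \ref{lemma:preimage} to identify the ideal $\widetilde I(L)$ (equivalently $J(L)$ after passing through the generating set $H$) as the ideal of the closure of this monomial image, so that there is no discrepancy between the closure of $\phi(\cdots)$ and the tropicalization. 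Finally, to conclude that $\phi$ is injective and hence that $\trop(X(L)^\circ)$ is abstractly a product of a tropical line with an $(\rank N)$-dimensional linear space, I would invoke that $C$ is pointed (stated just before Theorem \ref{thm:embed}), so $C^\vee$ is full-dimensional in $(M\oplus\ZZ^m)_\RR$; therefore the generating set $H$ spans $(M\oplus\ZZ^m)_\RR$, which forces $\phi\colon v \mapsto (\langle v,u\rangle)_{u\in H}$ to be injective. Since $\trop(L^\circ)$ is the standard tropical line in $\RR^m$ (the fan with rays $-e_0,\dots,-e_m$, say, a one-dimensional complex) and $N_\RR$ is a linear space of dimension $\rank N$, and $\phi$ is a linear injection, the image is a product of a tropical line with a plane of dimension $\rank N$.

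**The main obstacle** I anticipate is the bookkeeping around closures and the precise form of functoriality: one must be careful that the linear-algebra image $\phi(N_\RR \times \trop(L^\circ))$ really equals $\trop$ of the \emph{variety} $X(L)^\circ$, not merely of some open subset, and that the monomial map $T\times(\KK^*)^m \to (\KK^*)^n$ restricted to $T\times L^\circ$ is generically injective (so that Sturmfels--Tevelev multiplicities are all one and the set-theoretic statement is clean). The generic injectivity follows because $\dim X(L) = \dim(T\times L^\circ) = \rank N + 1 = \dim \phi(N_\RR\times\trop L^\circ)$ and $\phi$ is injective, so the map onto its image is birational onto $X(L)^\circ$. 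A secondary, routine point is recording explicitly that $\trop(L^\circ)$ is a tropical line, which follows since $L^\circ$ is (up to the $(\KK^*)^m$-action, by Remark \ref{rem:choice}) a generic line in $\PP^m$ through the torus, whose tropicalization is the standard tropical line — alternatively this is immediate from the description of the ideal $I(L)$ by $2\times 2$ minors and a direct check, or can simply be cited. None of these steps requires a hard computation; the content is in correctly invoking functoriality of tropicalization under the monomial map $\phi$ together with Lemma \ref{lemma:preimage}.
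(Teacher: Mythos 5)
Your proof is correct and follows essentially the same route as the paper's: identify $X(L)^\circ$ with $T\times L^\circ$ via Lemma \ref{lemma:preimage}, observe the embedding into $(\KK^*)^n$ is the monomial map whose linear part is $\phi$, and then apply functoriality of tropicalization \cite[Corollary 3.2.13]{tropical} together with the fact that tropicalization takes products to products. The extra points you address — injectivity of $\phi$ from pointedness of $C$, and the identification of the closure of $\phi(N_\RR\times\trop L^\circ)$ with the tropicalization of the variety — are sensible elaborations the paper leaves implicit, but they do not change the argument.
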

\begin{proof}
By construction, $X(L)\subset X(C)$, and $X(L)^\circ$ is the image in $\Aff^{n}$ of $U:=X(L)\cap \spec \KK[M\times \ZZ^m]$. 
By Lemma \ref{lemma:preimage}, $U$ is just $T\times L^\circ$, and the map $X(C)\to \Aff^n$ is given coordinate-wise by the functions $\chi^u$ for $u\in H$. Since the tropicalization of this monomial map is exactly $\phi$, applying the tropicalization functor \cite[Corollary 3.2.13]{tropical} yields the claim.
\end{proof}

If the intersection points $Q_i$ of $L$ with the coordinate hyperplanes are all distinct, $\trop(L^\circ)$ is the one-dimensional fan in $\RR^m$ with rays $\rho_0,\rho_1,\ldots,\rho_m$ generated by $e_0,e_1,\ldots,e_m$. This is the case if $L=L_\D$ as in Remark \ref{rem:choice}.
 It follows that $\trop(X(L)^\circ)$ has $m+1$ maximal dimensional cones $C_0,\ldots,C_m$, with $C_i=\phi(\rho_i\times N_\RR)$. 
 We picture $\trop (L^\circ)$ in Figure \ref{fig:line} in the case $m=2$.
If some of the intersection points $Q_i$ coincide, then $\trop(L^\circ)$ is a degenerate tropical line, and has fewer than $m+1$ rays.

 \begin{figure}
	\begin{tikzpicture}[scale=.6]
    \draw [-] (0,0) -- (-2,-2);
    \draw [-] (0,0) -- (2,0);
    \draw [-] (0,0) -- (0,2);
    \end{tikzpicture}
 \caption{$\trop (L^\circ)$ in the case $m=2$}\label{fig:line}
 \end{figure}
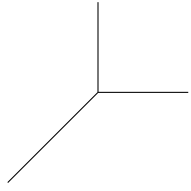

\begin{cor}\label{cor:tropbasis}
Let $\G(L)$ be a tropical basis for $L^\circ$ whose elements are dehomogenizations of linear forms. Then a tropical basis for $X(L)^\circ$ which generates the ideal $J(L)\subset S$ may be constructed by taking generators of the toric ideal of $X(C)\subset \Aff^n$, along with lifts to $S$ of the generators of $\widetilde{I}(L)$ produced by Proposition \ref{prop:ideal}.
\end{cor}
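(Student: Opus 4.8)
The plan is to combine the explicit description of $\trop(X(L)^\circ)$ from Proposition \ref{prop:tropmap} with the explicit generators of $\widetilde{I}(L)$ from Proposition \ref{prop:ideal}, reducing everything to the tropical line $\trop(L^\circ)$. First I would fix notation: let $B$ be a set of generators of the toric ideal of $X(C)\subset \Aff^n$, and for each $g\in\G(L)$ and $(u,v)\in\mcP$ let $\widetilde g_{u,v}\in S$ be a lift of the generator $(gz^v)\cdot\chi^u$ of $\widetilde I(L)$ from Proposition \ref{prop:ideal}; set $G = B\cup\{\widetilde g_{u,v}\}$. That $G$ generates $J(L)$ is immediate: $B$ cuts out $X(C)$ inside $\Aff^n$, and modulo the toric ideal the $\widetilde g_{u,v}$ restrict to the given generators of $\widetilde I(L)$, which is the ideal of $X(L)$ in $X(C)$ by Theorem \ref{thm:embed}. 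So the content is the tropical-basis assertion, i.e. that $\trop(X(L)^\circ) = \bigcap_{f\in G}\trop(V(f))$; the inclusion $\subseteq$ is automatic, so I need $\supseteq$.

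The key step is to understand what the right-hand side is. By Proposition \ref{prop:tropmap}, $\trop(X(L)^\circ) = \phi(N_\RR\times\trop(L^\circ))$ with $\phi$ injective linear, so a point $w$ lies in this set iff $w$ lies in the image of $\phi$ \emph{and} the preimage $\phi^{-1}(w) = (v_N, v_{\ZZ^m})$ has $v_{\ZZ^m}\in\trop(L^\circ)$. The binomial generators $B$ of the toric ideal of $X(C)$ form a tropical basis for the torus orbit $X(C)^\circ\cong (\KK^*)^{\dim C^\vee}$, whose tropicalization is exactly the linear space $\im\phi$; thus $\bigcap_{f\in B}\trop(V(f)) = \im\phi$. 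This handles membership in $\im\phi$. It remains to show that, for $w\in\im\phi$ with preimage $(v_N,v_{\ZZ^m})$, having $w\in\trop(V(\widetilde g_{u,v}))$ for all $(u,v)\in\mcP$ forces $v_{\ZZ^m}\in\trop(L^\circ)$. Here I would unwind the definition: $\widetilde g_{u,v}$ is (a lift of) $gz^v\chi^u$ with $g\in\G(L)$ linear in the $z_i$; its Newton polytope, after applying $\phi^{-1}$ and translating by $(u,v)$, is governed entirely by the Newton polytope of $g$ in the $z$-variables. A short computation shows $\In_w(\widetilde g_{u,v})$ is a monomial precisely when $\In_{v_{\ZZ^m}}(g)$ is a monomial. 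Since $\{g\in\G(L)\}$ was chosen to be a tropical basis for $L^\circ$ (and every element of $I(L)$ linear in $z$, in particular each $g$, already has its tropicalization containing $\trop(L^\circ)$), we get: $v_{\ZZ^m}\notin\trop(L^\circ)$ implies some $g\in\G(L)$ has $\In_{v_{\ZZ^m}}(g)$ a monomial, hence some $\widetilde g_{u,v}$ has monomial initial form at $w$, hence $w\notin\trop(V(\widetilde g_{u,v}))$. Contrapositively, $w$ in the full intersection forces $v_{\ZZ^m}\in\trop(L^\circ)$, i.e. $w\in\trop(X(L)^\circ)$, completing $\supseteq$.

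The main obstacle I anticipate is bookkeeping the relationship between $\In_w$ taken in the polynomial ring $S$ (in the variables $x_u$, $u\in H$) and $\In_v$ taken in the Laurent ring $\KK[z_1^{\pm1},\dots,z_m^{\pm1}]$ where $g$ lives — one must pass through $X(C)$, be careful that the lift $\widetilde g_{u,v}\in S$ may differ from the "honest" image by elements of the toric ideal (so its initial form at $w$ is only controlled \emph{modulo} $\In_w$ of the toric ideal, which is why $B$ must be in $G$), and check that $w$ lying on $\im\phi$ already pins down the $x_u$-support of $\widetilde g_{u,v}$ enough that its $w$-initial form corresponds under $\phi$ to the $v_{\ZZ^m}$-initial form of $g$ times a single Laurent monomial. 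Once the translation between these two pictures is set up cleanly, each individual verification is routine; I would also remark that a tropical basis of $L^\circ$ consisting of dehomogenizations of linear forms exists because $L\subset\PP^m$ is a line, so its ideal is generated by linear forms and (for the generic $L_\D$ of Remark \ref{rem:choice}) one may even take the three-term "circuit" linear forms, mirroring the $\Gr(2,4)$ discussion in the introduction.
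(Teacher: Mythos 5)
Your proposal is correct and follows essentially the same approach as the paper's proof: the toric ideal generators tropicalize to cut out $\im\phi$, and the lifts of the $\widetilde I(L)$ generators from Proposition \ref{prop:ideal} cut out $N_\RR\times\trop(L^\circ)$ inside that linear space, so by Proposition \ref{prop:tropmap} their common intersection recovers $\trop(X(L)^\circ)$. The paper's proof is considerably terser; your version spells out the bookkeeping between $\In_w$ in $S$ and $\In_{v_{\ZZ^m}}(g)$ in the $z$-variables, which the paper leaves implicit.
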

\begin{proof}
After tropicalization, the generators of the toric ideal cut out the linear space given by the image of $\phi$. It remains to cut out $\trop(X(L)^\circ)$ inside this linear subspace. The generators of $\widetilde{I}(L)$ produced by Proposition \ref{prop:ideal} cut out $N_\RR\times \trop(L^\circ)$, so their lifts to $S$ will cut out $\trop(X(L)^\circ)$ after intersecting with $\im \phi$.  
\end{proof}

We also wish to describe $\In_w(J(L))$ for $w\in \trop(X(L)^\circ)$. 
Since $\trop(X(L)^\circ)$ is contained in the image of the injective linear map $\phi$ above,  weights $w\in\trop(X(L)^\circ)$ in this subspace can be interpreted as giving weights on monomials of $\KK[M\times\ZZ^m]$. In particular, any $w\in \trop(X(L)^\circ)$ can be interpreted as a weight on $\KK[\ZZ^m]$ or on the monomials of $A(C)$.
\begin{defn}
For any $w\in \trop(X(L)^\circ)$, 
\[
L_w:=\overline {V(\In_w I(L))}\subset\PP^m.
\]
\end{defn}
\begin{lemma}The variety $L_w$ is a line in $\PP^m$ intersecting the torus.
\end{lemma}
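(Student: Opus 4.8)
The plan is to reduce the assertion to the classical fact that Gr\"obner degenerations of a linear space are again linear spaces of the same dimension. By Proposition~\ref{prop:tropmap}, $w=\phi(a,b)$ for a unique pair $(a,b)\in N_\RR\times\trop(L^\circ)$, and since $I(L)\subset\KK[\ZZ^m]$ involves only the variables $z_i$, the weight that $w$ induces on $\KK[\ZZ^m]$ is precisely $b$. Hence $\In_w I(L)=\In_b I(L)$ with $b\in\trop(L^\circ)$, and it suffices to prove the claim for $\In_b I(L)$.

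Recall from \S\ref{sec:ideal} that $I(L)$ is generated by affine-linear forms in the $z_i$. The $\KK$-vector space $W$ of all affine-linear forms lying in $I(L)$ has dimension $m-1$: since $L$ is a line in $\PP^m$ meeting the torus, it lies in no coordinate hyperplane, so homogenizing ($z_i\mapsto y_i/y_0$) identifies $W$ with the $(m-1)$-dimensional space of linear forms on $\PP^m$ that vanish on $L$. Now fix a term order $\prec$ refining $b$ (one may work with the homogenization of $I(L)$ in $\KK[y_0,\ldots,y_m]$, where $b$ becomes the weight $(0,b_1,\ldots,b_m)$, so that no positivity hypothesis on $b$ is needed). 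An echelon basis $\ell_1,\ldots,\ell_{m-1}$ of $W$ with respect to $\prec$ is a Gr\"obner basis of $I(L)$, so by standard Gr\"obner theory $\In_b I(L)=\langle\In_b\ell_1,\ldots,\In_b\ell_{m-1}\rangle$; moreover the $\In_b\ell_i$ have pairwise distinct $\prec$-leading monomials, hence are linearly independent affine-linear forms. (Alternatively, one may invoke directly that the initial ideals of a linear ideal are again linear ideals of the same dimension.) Homogenizing back, $L_w$ is cut out in $\PP^m$ by the $m-1$ linearly independent linear forms obtained as homogenizations of the $\In_b\ell_i$, so it is a line.

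It remains to check that $L_w$ meets the torus. Since $b\in\trop(L^\circ)$, the ideal $\In_b I(L)=\In_w I(L)$ contains no monomial, hence is a proper ideal of the Laurent ring $\KK[\ZZ^m]$, so $V(\In_w I(L))$ is a non-empty subvariety of $(\KK^*)^m$. Therefore its closure $L_w$ meets $(\KK^*)^m$; and as this non-empty set is open in the line produced above, that line is exactly $L_w$. The only step here that goes beyond bookkeeping with Proposition~\ref{prop:tropmap} and the definition of $\trop(L^\circ)$ is the Gr\"obner-theoretic input that $\In_b I(L)$ remains generated by affine-linear forms; I regard that (standard) fact as the main obstacle.
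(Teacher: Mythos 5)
Your proof is correct, and it reaches the same conclusion as the paper's argument, but via a slightly different mechanism for the "meets the torus" step. The paper argues by contradiction entirely inside the ambient ring: if $\In_w I(L)$ contained a monomial $\In_w f$ for some $f\in I(L)$, one lifts to $f\cdot\chi^u\in\widetilde{I}(L)$ for a suitable $u\in M$ (implicitly, after first multiplying $f$ by a Laurent monomial so that such a $u$ exists), obtaining a monomial in $\In_w J(L)$ and contradicting $w\in\trop(X(L)^\circ)$. You instead invoke Proposition~\ref{prop:tropmap} as a black box to write $w=\phi(a,b)$ with $b\in\trop(L^\circ)$, note that the weight $w$ restricts to $b$ on $\KK[\ZZ^m]$, and conclude immediately that $\In_w I(L)=\In_b I(L)$ contains no monomial. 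The two arguments rest on the same structural fact (namely $X(L)^\circ\cong T\times L^\circ$, i.e.\ Lemma~\ref{lemma:preimage}), but your version routes it through the already-proved Proposition~\ref{prop:tropmap}, which saves you from checking the lifting step $f\mapsto f\cdot\chi^u$ that the paper leaves implicit. On the other hand, the paper dismisses the "is a line" part as clear, whereas you spell out the Gr\"obner-theoretic fact that initial ideals of linear ideals are linear of the same dimension; that level of detail is more than what's needed, but it is not the crux. One small remark on your last paragraph: what you actually establish is that $V(\In_w I(L))$ is nonempty and equals $L'\cap(\KK^*)^m$ for the line $L'$ you constructed, whence $L_w=\overline{V(\In_w I(L))}=L'$; stating it in that order avoids any appearance of circularity.
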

\begin{proof}
Clearly the variety $L_w$ is a line in $\PP^m$. We must thus show that $\In_w I(L)$ does not contain a monomial.
If instead  $\In_w I(L)$ contains a monomial, there is some $f\in I(L)$ such that $\In_w f$ is a monomial. Furthermore, there is some degree $u\in M$ such that $f\cdot \chi^u\in \widetilde{I}(L)$, hence $\In_w \widetilde{I}(L)$ and thus $\In_w J(L)$ also contain a monomial. But this contradicts $w\in\trop(X(L)^\circ)$.
\end{proof}
Given the above lemma, we may apply the construction of \S\ref{sec:affine} to obtain an irreducible variety $X(L_w)$ embedded in $\Aff^{n}$ via an embedding in $X(C)$. Let $J(L_w)$ denote the ideal of this affine variety.  

\begin{thm}\label{thm:poised}
For all $w\in \trop(X(L)^\circ)$, \[\In_w J(L)=J(L_w).\] In particular, the embedding $X(L)\hookrightarrow \Aff^{n}$ is well-poised.
\end{thm}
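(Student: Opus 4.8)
The plan is to prove the identity $\In_w J(L)=J(L_w)$ of ideals in $S$, and then deduce well-poisedness from it: since $L_w$ is a line in $\PP^m$ meeting the torus, $X(L_w)$ is by construction an affine rational complexity-one $T$-variety (or more precisely $X(\D')$ for a degenerate collection of points, which is still integral), so $J(L_w)$ is a prime ideal; as $w$ ranges over $\trop(X(L)^\circ)$ this shows every initial ideal is prime, which is exactly the definition of well-poised.

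To establish the ideal identity, I would work degree by degree with respect to the $M$-grading, exploiting the fact that all the ideals involved are homogeneous for the $T\times(\KK^*)^m$-action (hence for the $M$-grading), since the weight $w$ lies in $\im\phi$ and therefore respects the monomial structure of $A(C)$. First I would reduce everything to the toric variety $X(C)$: since the embedding $X(L)\hookrightarrow\Aff^n$ factors through $X(C)$, and the toric ideal of $X(C)$ is a prime binomial ideal all of whose initial ideals (with respect to weights in $\im\phi$) equal itself, it suffices to prove $\In_w\widetilde I(L)=\widetilde I(L_w)$ as ideals in $A(C)$, and then lift along $S\to A(C)$. Next, using the explicit generators of $\widetilde I(L)$ from Proposition \ref{prop:ideal} — namely $(gz^v)\cdot\chi^u$ for $g\in\G(L)$ and $(u,v)\in\mcP$ — I would observe that $\In_w\big((gz^v)\cdot\chi^u\big)=(\In_w g)\,z^v\cdot\chi^u$, because the weight $w$ on $A(C)$ restricts to a weight on $\KK[z_1,\dots,z_m]$ and multiplication by the fixed monomial $z^v\cdot\chi^u$ only shifts weights by a constant. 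Since $\G(L)$ consists of dehomogenized linear forms and the $\In_w g$ are dehomogenizations of linear forms generating $\In_w I(L)=I(L_w)$, the set $\{(\In_w g)\,z^v\cdot\chi^u\}$ is, by Proposition \ref{prop:ideal} applied to $L_w$, a generating set for $\widetilde I(L_w)$. This gives the containment $\widetilde I(L_w)\subseteq \In_w\widetilde I(L)$ immediately.

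The reverse containment is where one must be careful, and I expect this to be the main obstacle: in general $\In_w$ of an ideal is not generated by the initial forms of a chosen generating set, so one cannot simply say $\In_w\widetilde I(L)$ is generated by the $\In_w$ of the Proposition \ref{prop:ideal} generators. The way around this is a dimension/Hilbert-function count in each $M$-graded piece. Fix $u\in M\cap\sigma^\vee$. By Theorem \ref{thm:embed}, $\dim_\KK A(L)_u=\dim_\KK A(C)_u-\dim_\KK\widetilde I(L)_u$, and the same holds with $L_w$ in place of $L$. Since $\In_w$ preserves dimension in each graded piece (initial forms of a $\KK$-basis adapted to the weight filtration form a basis of the initial space), $\dim_\KK\In_w\widetilde I(L)_u=\dim_\KK\widetilde I(L)_u$. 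So it remains to check $\dim_\KK\widetilde I(L)_u=\dim_\KK\widetilde I(L_w)_u$ for every $u$, equivalently $\dim_\KK A(L)_u=\dim_\KK A(L_w)_u$. But in degree $u$ both $A(L)_u$ and $A(L_w)_u$ are (up to the fixed monomial twist $g_u\cdot\chi^u$) the space of restrictions to a line in $\PP^m$ of degree-$d_u$ forms, where $d_u=\sum_i\lfloor\Delta_i(u)\rfloor$; such a restriction space has dimension $\min(d_u+1,\ast)$ determined purely by $d_u$ and the fact that the line meets the torus, not by the particular line. Hence the dimensions agree, forcing $\widetilde I(L_w)_u=\In_w\widetilde I(L)_u$ for all $u$, and we are done at the level of $A(C)$.

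Finally, to transfer back to $S$: the surjection $\pi\colon S\to A(C)$ sending $x_u\mapsto\chi^u$ has kernel the toric ideal $I_C$ of $X(C)$, and $J(L)=\pi^{-1}(\widetilde I(L))$, $J(L_w)=\pi^{-1}(\widetilde I(L_w))$. Because $\In_w I_C=I_C$ and $I_C$ is homogeneous with respect to the relevant grading, one checks $\In_w\pi^{-1}(\widetilde I(L))=\pi^{-1}(\In_w\widetilde I(L))$ — this is the standard compatibility of initial ideals with the quotient when the modulus ideal is $w$-stable, and here it goes through because a Gröbner basis of $J(L)$ can be chosen to contain a Gröbner basis of $I_C$. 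Combining with $\In_w\widetilde I(L)=\widetilde I(L_w)$ gives $\In_w J(L)=\pi^{-1}(\widetilde I(L_w))=J(L_w)$. Since $X(L_w)$ is integral, $J(L_w)$ is prime, so every cone of $\trop(X(L)^\circ)$ is prime and the semi-canonical embedding $X(L)\hookrightarrow\Aff^n$ is well-poised.
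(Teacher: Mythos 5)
Your proof is correct and reaches the same conclusion, but you take a noticeably different (and more elaborate) route for the containment $\In_w\widetilde I(L)\subseteq\widetilde I(L_w)$. The paper disposes of this direction directly: any $M$-homogeneous element of $\widetilde I(L)$ has the form $f\cdot\chi^u$ with $f\in I(L)$, and since $w\in\im\phi$ splits as a weight on $\KK[M]$ plus a weight on the $z$-variables, $\In_w(f\cdot\chi^u)=\In_w(f)\cdot\chi^u$; because $\In_w(f)\in I(L_w)$ and the support of $\In_w(f)\cdot\chi^u$ is a subset of that of $f\cdot\chi^u$, the initial form lands in $\widetilde I(L_w)$, and one is done immediately (no Gr\"obner basis claim is needed, only the $M$-gradedness of $\widetilde I(L)$). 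You instead get this inclusion indirectly via a Hilbert-function count in each $M$-degree, using that both $A(L)_u$ and $A(L_w)_u$ are spaces of sections of a degree-$d_u$ divisor on $\PP^1$ and hence have equal dimension, together with the fact that passing to initial forms preserves the dimension of each graded piece. That argument is valid — it has the merit of making clear that the two sides are forced to coincide numerically — but it is heavier machinery than the paper needs, since the direct computation on homogeneous elements is available. For the other inclusion $\widetilde I(L_w)\subseteq\In_w\widetilde I(L)$ you proceed exactly as the paper does, via Proposition \ref{prop:ideal}; note only that this step silently requires choosing $\G(L)$ so that $\{\In_w g : g\in\G(L)\}$ actually generates $I(L_w)$, i.e., so that $\G(L)$ is a $w$-Gr\"obner basis of the linear ideal $I(L)$. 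This is always possible (a reduced Gr\"obner basis of a linear ideal consists of linear forms), and the paper states this hypothesis explicitly; you should do the same, since the initial forms of an arbitrary generating set need not generate the initial ideal. Your reduction from $S$ to $A(C)$, using that $\In_w I_C=I_C$ for $w\in\im\phi$, matches the paper's (tersely stated) reduction. The deduction of well-poisedness from primality of $J(L_w)$ is the same as the paper's.
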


\begin{proof}
Note that $J(L)$ is just the preimage of $\widetilde{I}(L)$ in $S$. Likewise, $J(L_w)$ is the preimage of $\widetilde{I}(L_w)$ in $S$. Hence, it will suffice to show that 
\[
\widetilde{I}(L_w)=\In_w(\widetilde{I}(L))
\]
with $w$ interpreted as a weight in $A(C)$.

To that end, consider a homogeneous element $f\cdot \chi^u\in\widetilde{I}(L)$, where $u\in M$ and $f\in I(L)$. Then 
\[
\In_w(f\cdot \chi^u)=\In_w(f)\cdot \chi^u.
\]
Since $\In_w(f)\in \In_w(I(L))$, we obtain that $\In_w(\widetilde{I}(L))\subset \widetilde{I}(L_w)$.

The reverse inclusion follows from Proposition \ref{prop:ideal}. Indeed, let $\G(L)$ be any set of generators of $I(L)$ linear in the $z_i$, whose initial terms with respect to $w$ generate $\In_w(I(L))=I(L_w)$. By Proposition \ref{prop:ideal}, $\In_w(g) z^v\cdot \chi^u$ generate $\widetilde{I}(L_w)$ for $g\in \G(L)$ and $(v,w)\in\mcP$. But by the same proposition, $g z^v\cdot \chi^u$ is then in $\widetilde{I}(L)$. Hence, $\In_w(\widetilde{I}(L))$ generates $\widetilde{I}(L_w)$.
\end{proof}

\begin{ex}\label{ex:d62}
We continue Example \ref{ex:d6}.
The tropicalization of $X$ has three maximal cones. The corresponding initial ideals are (up to permutation of indices) all generated by $x_1^2+x_2^2x_3$. This corresponds to a degeneration to a non-normal toric variety.

Note that this embedding of $X(L)$ is not minimal: we can eliminate e.g. $x_4$ and arrive at the single equation $x_1^2+x_2^2x_3+x_2x_3^2=0$. This minimal embedding of $X(L)$ is not well-poised: one of the initial ideals is generated by $x_2^2x_3+x_2x_3^3$, which is not prime.
\end{ex}

\begin{ex}\label{ex:proj2}
  We continue Example \ref{ex:proj}
  As in Example \ref{ex:d62}, the tropicalization has three maximal cones. The corresponding initial ideals are (up to permutation of indices) all generated by the $2\times 2$ minors of 
\[
\left(\begin{array}{c c c}
x_0&x_4&x_7\\
x_1&-x_0&x_8\\
x_2&x_5&x_6
\end{array}\right)
\]
One may check directly that this initial ideal defines a normal toric variety. Alternatively, since the coefficients $\Delta_0,\Delta_1,\Delta_2$ are all lattice polytopes, Remarks \ref{rem:normal} and \ref{rem:toric} immediately imply that this is the case. 
  \end{ex}

  \section{Valuations and Newton-Okounkov Bodies}\label{sec:part2}
\subsection{Basics}\label{sec:val}
Let $R$ be a $\KK$-domain of finite type of Krull dimension $d$, and $(\Gamma,<)$ a totally ordered, finitely generated free abelian group. A \emph{$\KK$-valuation} of $R$ with values in $\Gamma$ is a map $\fv: R \setminus \{0\} \to \Gamma$ such that  $\fv(\KK^*)=0$, and for all $f,g\in R$, $\fv(fg)=\fv(f)+\fv(g)$ and $\fv(f+g)\geq \min \{\fv(f),\fv(g)\}$.   
For details on valuations, see \cite[Chapter VI]{ZS}.
The image of $\fv$ is denoted $S(R, \fv)$; it is a semigroup in $\Gamma$ known as the \emph{value semigroup}. We denote the closure of the positive hull of $S(R,\fv)$ by $C(R,\fv)$.

Any valuation $\fv$ on $R$ can be extended uniquely to a valuation on the field of fractions of $R$. 
Following standard terminology, the \emph{rank} $r(\fv)$  of $\fv$ is the rank of the free Abelian group generated by $S(R, \fv)$. This latter group is the same as the image of the extension of $\fv$ to the field of fractions of $R$. The rank $r(\fv)$ is always at most the Krull dimension of $R$, and we say that $\fv$ is of \emph{full rank} if equality holds.  For any full rank valuation $\fv$, a \emph{Khovanskii basis} for $\fv$ is a subset of $R$ whose image under $\fv$ generates $S(R,\fv)$, see e.g.~\cite{Kaveh-Manon-NOK}.\footnote{Although it appears different, our definition of Khovanskii basis agrees with that of \cite{Kaveh-Manon-NOK}, see \cite[Proposition 2.4]{Kaveh-Manon-NOK}.}

\subsection{Homogeneous Valuations on Complexity-One $T$-Varieties}\label{sec:homog}
Suppose that the domain $R$ is graded by a lattice $M$. We say that a valuation $\fv$ is \emph{$M$-homogeneous} (or just homogeneous) if for all $f\in R$,
\[
	\fv(f)=\min_{u\in M} \{\fv(f_u)\}
\]
where $f=\sum_{u\in M} f_u$ is the decomposition of $f$ into graded pieces.

We will now study homogeneous valuations (and their value semigroups) for coordinate rings of affine rational complexity-one $T$-varieties. Indeed, let $X=X(L)$ be an affine rational complexity-one $T$-variety as in \S\ref{sec:affine}. The coordinate ring $R_X=A(L)$ is $M$-graded, and localizes to the ring $\KK[L^\circ][M]$.

Fix a totally ordered finitely generated abelian group $\Gamma$, a homomorphism $\psi:M\to \Gamma$, a point $Q\in L$, and an element $\gamma\in \Gamma$, $\gamma\geq 0$. The tuple $(\psi, Q, \gamma)$ determines a homogeneous valuation $\fv_{(\psi,Q,\gamma)}$ of $R_X$ by 
\[
f\cdot \chi^u\mapsto \psi(u)+\ord_Q(f)\cdot \gamma
\]
for $f\in \KK(L)$, $u\in M$. Here, $\ord_Q(f)$ is the order of vanishing of $f$ at $Q$. It is straightforward to check that this indeed determines a valuation.

\begin{prop}\label{prop:homog}
Every $M$-homogeneous valuation on $R_X$ is of the form $\fv_{(\psi,Q,\gamma)}$.
\end{prop}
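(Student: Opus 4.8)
The plan is to show that a given $M$-homogeneous valuation $\fv$ on $R_X = A(L)$ is reconstructed from three pieces of data: a homomorphism $\psi : M \to \Gamma$, a point $Q \in L$, and a nonnegative element $\gamma \in \Gamma$. First I would extract $\psi$. Since $R_X$ localizes to $\KK[L^\circ][M] = \KK(L) \otimes_\KK \KK[M]$, and $\fv$ extends uniquely to the fraction field, we have $\fv$ defined on $\KK(L)[M^{\pm}]$. For each $u \in M$ pick a nonzero homogeneous element of degree $u$ (e.g. some $f\cdot\chi^u$ in $R_X$ after clearing, or work in the fraction field where $\chi^u$ itself makes sense); using multiplicativity, $\fv(\chi^{u+u'}) = \fv(\chi^u) + \fv(\chi^{u'})$, so $u \mapsto \fv(\chi^u)$ is a group homomorphism $\psi : M \to \Gamma$. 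Homogeneity then forces $\fv(f\cdot\chi^u) = \psi(u) + \fv(f)$ for $f \in \KK(L)$, so everything is determined once we know the restriction of $\fv$ to $\KK(L) \subset \mathrm{Frac}(R_X)$.

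Next I would analyze $\fv|_{\KK(L)}$. This is a $\KK$-valuation on the function field of the curve $L \cong \PP^1$ (trivial on $\KK^*$). The classical classification of valuations on $\KK(\PP^1)$ trivial on $\KK$ says each such valuation is, up to a positive scalar, the order-of-vanishing valuation $\ord_Q$ at some closed point $Q \in L$ (there is also the trivial valuation, corresponding to $\gamma = 0$). Here "positive scalar'' should be read as: there is $\gamma \in \Gamma$, $\gamma \geq 0$, lying in the rank-$1$ subgroup generated by $\fv$'s values on $\KK(L)$, with $\fv(f) = \ord_Q(f)\cdot \gamma$ for all $f \in \KK(L)\setminus\{0\}$. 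I would establish this directly: since $\fv$ is trivial on $\KK$ and $\KK(L) = \KK(t)$ has transcendence degree one, $\fv|_{\KK(L)}$ has rank at most one; if it is trivial we are in the $\gamma = 0$ case; otherwise its value group is a rank-one ordered group, and the valuation ring is a DVR dominating some local ring $\CO_{L,Q}$ — the point $Q$ is the center, and comparing $\fv$ with $\ord_Q$ on a uniformizer at $Q$ pins down $\gamma$ as the value of that uniformizer, with $\fv = \gamma\cdot\ord_Q$ following because any $f \in \KK(L)$ differs from a power of the uniformizer by a unit in $\CO_{L,Q}$ (hence of $\fv$-value zero, as $\fv$ is trivial on that residue field which is just $\KK$).

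Finally I would assemble the pieces: for $f\cdot\chi^u \in R_X$ we get $\fv(f\cdot\chi^u) = \psi(u) + \gamma\cdot\ord_Q(f)$, which is exactly $\fv_{(\psi,Q,\gamma)}$; one should double-check that the $Q$ obtained lies in $L$ (not just in some abstract curve) and that $\gamma \geq 0$ in $\Gamma$ — the latter because $\ord_Q$ takes nonnegative values on $\CO_{L,Q}$ while $\fv$ agrees with $\gamma\cdot\ord_Q$ there and must be $\geq 0$ on the (sub)ring. I expect the main obstacle to be the rigorous identification of $\fv|_{\KK(L)}$ with a multiple of an order valuation: one must handle the interaction between the ordering on the possibly higher-rank group $\Gamma$ and the rank-one valuation on $\KK(L)$, i.e.\ verify that the image of $\KK(L)$ under $\fv$ generates a rank-one (hence cyclic, since finitely generated torsion-free) subgroup of $\Gamma$ compatibly ordered, and that the center $Q$ is a genuine closed point of $L$ rather than its generic point. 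The homogeneity hypothesis is what makes the reduction to $\KK(L)$ clean, and the $\PP^1$-structure of $L$ is what makes the curve-valuation classification elementary.
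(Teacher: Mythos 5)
Your proposal is correct and follows essentially the same route as the paper: decompose the $M$-homogeneous valuation via its restrictions to $\KK[M]$ (yielding the homomorphism $\psi$) and to $\KK(L)$, then classify $\KK$-valuations on $\KK(\PP^1)$ as $\ord_Q\cdot\gamma$. The paper isolates that last step as a short Lemma~\ref{lemma:val}, proved elementarily by intersecting the valuation ring with $\KK[t]$ and showing it equals $\KK[t]_P$, rather than via Abhyankar's inequality and the existence of a center; the care points you flag (the image of $\fv|_{\KK(L)}$ is a rank-one, hence cyclic, ordered subgroup of $\Gamma$, and the center is a genuine closed point of $L$) are exactly what that lemma addresses.
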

\begin{proof}
	The data of a valuation on $R_X$ is equivalent to the data of a valuation on $\KK(L)[M]$. Homogeneous valuations on the latter are determined by their restrictions to $\KK(L)$ and $\KK[M]$. A homogeneous valuation on $\KK[M]$ is simply a homomorphism $\psi:M\to \Gamma$. Finally, every valuation of $\KK(L)$ is of the form $f\mapsto \ord_Q(f)\cdot \gamma$ for some $Q\in L$ and some $\gamma\in\Gamma$, $\gamma\geq 0$, see Lemma \ref{lemma:val} below. 
\end{proof}

The following lemma is well-known; we include it and its proof for lack of a suitable reference.
\begin{lemma}\label{lemma:val}
Every $\KK$-valuation $\fv$ on $\KK(\PP^1)$ is of the form $f\mapsto \ord_Q(f)\cdot \gamma$ for some $Q\in \PP^1$ and some $\gamma\in\Gamma$, $\gamma\geq 0$.
\end{lemma}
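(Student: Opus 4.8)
The plan is to classify all $\KK$-valuations on the function field $\KK(\PP^1) = \KK(t)$ by a direct argument, distinguishing the trivial valuation from the nontrivial ones. First I would dispose of the trivial case: if $\fv$ vanishes on all of $\KK(t)\setminus\{0\}$, then $\fv = \ord_Q(\cdot)\cdot 0$ for any choice of $Q$, so we may take $\gamma = 0$. Henceforth assume $\fv$ is nontrivial. Since $\fv(\KK^*) = 0$ and every element of $\KK(t)$ is a ratio of products of linear factors $(t - a)$ and a scalar (using that $\KK$ is algebraically closed), the value $\fv(f)$ is determined by the finitely many values $\fv(t - a)$ for $a \in \KK$, together with $\fv(1/t)$ to handle the point at infinity. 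Writing $\gamma_a := \fv(t-a)$, I would first argue that at most one $\gamma_a$ is nonzero, and more precisely that at most one of the values $\{\fv(t-a) : a \in \KK\} \cup \{\fv(1/t)\}$ is nonzero.

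The key step is the following observation. For distinct $a, b \in \KK$, the identity $(t-a) - (t-b) = b - a \in \KK^*$ has valuation $0$, so by the ultrametric inequality and the standard fact that $\fv(x+y) = \min\{\fv(x),\fv(y)\}$ whenever $\fv(x)\neq\fv(y)$, we must have $\fv(t-a) = \fv(t-b)$ unless both equal... — more carefully: $0 = \fv(b-a) \geq \min\{\fv(t-a), \fv(t-b)\}$ forces $\min\{\gamma_a,\gamma_b\} \leq 0$; combined with a normalization this will pin things down. I would then observe that if some $\gamma_a > 0$, then for all $b \neq a$ we get $\gamma_b \leq 0$; and symmetric reasoning between $t-b$ and $t-c$ for $b,c \neq a$ gives $\gamma_b = \gamma_c$. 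A short computation with $1/t = \frac{1}{t-a}\cdot\frac{t-a}{t}$ and $\frac{t-a}{t} = 1 - a/t$ relates $\fv(1/t)$ to the $\gamma_b$'s as well. The upshot is that there is a single point $Q \in \PP^1$ — either some $a \in \Aff^1$ or $\infty$ — such that $\fv$ restricted to the local parameter at $Q$ gives a value $\gamma > 0$ (positive after we rule out the degenerate sign using that the local ring at $Q$ maps into the valuation ring), and $\fv$ of every other linear form, suitably normalized to a unit, is $0$. Then for arbitrary $f \in \KK(t)^*$, factoring $f$ and using additivity yields $\fv(f) = \ord_Q(f)\cdot\gamma$, since $\ord_Q$ counts exactly the multiplicity of the local parameter at $Q$ in the factorization and all other factors contribute $0$.

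The main obstacle I anticipate is the bookkeeping around the point at infinity and the sign of $\gamma$: one must handle $\PP^1$ rather than $\Aff^1$, so the chart $\KK[t]$ does not see $\infty$, and one has to be careful that $\fv$ could a priori be "centered at infinity" with $\fv(t) < 0$; the fix is to pass to the coordinate $1/t$ on the other chart and observe the situation is symmetric under $t \mapsto 1/t$. One also needs to know that $\gamma \geq 0$, which follows because $\fv$ must be nonnegative on some local ring $\CO_{\PP^1, Q}$ (any valuation of a field dominates a local ring of some point of a projective model, or, concretely, because the two sign possibilities for $\fv(t)$ and $\fv(1/t)$ are mutually exclusive by $\fv(t) + \fv(1/t) = 0$, and whichever is $\leq 0$ identifies the center $Q$ with $\gamma := -\fv$ of that parameter $\geq 0$). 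Once the center $Q$ and the scalar $\gamma$ are identified, verifying the formula $\fv(f) = \ord_Q(f)\cdot\gamma$ on all of $\KK(\PP^1)^*$ is routine via unique factorization of rational functions.
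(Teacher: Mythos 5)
Your argument is correct in outline but takes a genuinely different, more computational route than the paper. The paper works directly with the valuation ring $S$ of $\fv$: after replacing $t$ by $t^{-1}$ if necessary one has $\KK[t]\subseteq S$, the contraction of $\mathfrak{m}_S$ to $\KK[t]$ is a maximal ideal $(t-c)$, and then $\KK[t]_{(t-c)}\subseteq S$ forces $S=\KK[t]_{(t-c)}$ because $\KK[t]_{(t-c)}$ is a DVR; once the valuation rings agree, $\fv$ is a positive multiple of $\ord_Q$. You instead factor rational functions into linear forms and constrain the values $\gamma_a=\fv(t-a)$ via the ultrametric inequality applied to $(t-a)-(t-b)=b-a$. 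Both approaches work; the paper's is shorter and avoids case analysis, while yours is more elementary and makes the structure of the value group visible by hand.

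One inaccuracy worth flagging: the intermediate claim that ``at most one of $\{\fv(t-a):a\in\KK\}\cup\{\fv(1/t)\}$ is nonzero'' is false as stated. If $\fv=\ord_0$ then $\fv(t)=1$ and $\fv(1/t)=-1$ are both nonzero; worse, if $\fv=\ord_\infty$ then \emph{every} $\gamma_a=\fv(t-a)$ equals $-1$. What the ultrametric computation actually gives is the dichotomy: for $a\neq b$, either $\gamma_a=\gamma_b\leq 0$, or $\{\gamma_a,\gamma_b\}$ consists of $0$ and a strictly positive element (because $\fv(x-y)=\min\{\fv(x),\fv(y)\}$ when the two values differ, and here $\fv(b-a)=0$). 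From this one deduces that in the nontrivial case either there is a unique $a$ with $\gamma_a>0$ and all other $\gamma_b=0$ (center at $a$, $\gamma=\gamma_a$), or all $\gamma_a$ are equal and negative (center at $\infty$, $\gamma=\fv(1/t)=-\gamma_a>0$). With that correction your proposal closes up; note that the ``every valuation has a center'' fact you invoke at the end to settle the sign of $\gamma$ is doing exactly the same job as the paper's opening step $\KK[t]\subseteq S$.
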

\begin{proof}
We may assume that $\fv$ is non-constant, otherwise we just take $\gamma=0$.
Say that $\KK(\PP^1)\cong \KK(t)$ and let $S$ be the valuation ring of $\fv$. By potentially replacing $t$ with its inverse, we have $\KK[t]\subseteq S$. The intersection of the maximal ideal of $S$ with $\KK[t]$ is a prime ideal of the form $P=\langle t-c \rangle$ for some $c\in \KK$. Then clearly $\KK[t]_P\subseteq S$, but the reverse inclusion holds as well, since otherwise $(t-c)^{-1}\in S$, implying $S=\KK(t)$ which contradicts that $\fv$ is non-constant. It follows that $\fv$ is given by $f\mapsto \ord_Q(f)\cdot \gamma$ where $Q$ is the point corresponding to $t-c$ and $\gamma$ is the valuation of $t-c$.
\end{proof}

\begin{rem}
The valuation $\fv_{(\psi,Q,\gamma)}$ is of full rank if and only if $\psi$ is of full rank and no multiple of $\gamma$ is contained in the image of $\psi$. 
\end{rem}

\begin{thm}\label{thm:khovanskii}
Let $\fv=\fv_{(\psi,Q,\gamma)}$ for any $Q\in L\setminus L^\circ$. Then the valuations of the semi-canonical generators of $A(L)$ generate the  value semi-group $S(R_X,\fv)$. In particular, they form a Khovanskii basis if $\fv$ is of full rank.
\end{thm}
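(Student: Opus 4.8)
The plan is to reduce the claim to a statement purely about the line $L\subset\PP^m$ and the point $Q\in L\setminus L^\circ$, exploiting the fact that $R_X=A(L)$ localizes to $\KK(L)[M]$ and that the valuation $\fv_{(\psi,Q,\gamma)}$ is built from $\psi$ on the torus factor and $\ord_Q$ on $L$. Concretely, I would first observe that since $Q\notin L^\circ$, the point $Q$ is one of the coordinate points $Q_i$ (or a limit of several coinciding ones); after reordering, say $Q=Q_0$. Fix $u\in M\cap\sigma^\vee$. By the computation in the proof of Proposition \ref{prop:ideal}, the degree-$u$ piece of $A(L)$ is spanned over $\KK$ by the monomials $z^v\cdot\chi^u$ with $(u,v)$ ranging over $P'\cap(M\times\ZZ^m)$, where $P'$ is the polytope defined by $v_i\geq -\Delta_i(u)$ for $i>0$ and $\sum v_i\leq \Delta_0(u)$ — this being the translate of $P$ used in Remark \ref{rem:inac}. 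Under $\fv$, such a monomial $z^v\chi^u$ has value $\psi(u)+\ord_Q(z^v)\cdot\gamma$, and $\ord_Q(z^v)=\ord_{Q_0}(z^v)$ is an explicit affine-linear function of $v$, minimized over $P'$ at the face where $\sum v_i = \lfloor\Delta_0(u)\rfloor$.

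Next I would show that for a general element $f\cdot\chi^u$ of $A(L)_u$, the value $\fv(f\chi^u)=\psi(u)+\ord_Q(f)\cdot\gamma$, and that $\ord_Q(f)$ is realized by the ``leading monomial'' of $f$ at $Q$. The key point is that the extremal monomials appearing in $A(L)_u$ — those maximizing $\ord_{Q_0}$, equivalently achieving $\sum v_i = \lfloor\Delta_0(u)\rfloor$ — are restrictions of the semi-canonical generators, up to multiplication by the further extremal monomials of $C^\vee$ (which also sit in the degree decomposition and whose values are of the form $\psi(u')$ with $\ord_Q=0$). More precisely, every $f\chi^u\in A(L)_u$ can be written as a $\KK$-linear combination of products of semi-canonical generators (since they generate $A(L)$ as a $\KK$-algebra, being generators of $A(C)$ restricted to $X(L)$), and among such products the maximal value of $\fv$ is attained; one then checks that no cancellation occurs at that maximal value, because a product of semi-canonical generators has value $\sum(\psi(u_j)+\ord_Q(f_j)\gamma)$ and the $\ord_Q$-part is determined by a single monomial on $L$ (as $\PP^1$ has a one-dimensional function field, so $\ord_Q$ is multiplicative and leading terms multiply). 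This shows $\fv(f\chi^u)$ lies in the subsemigroup of $\Gamma$ generated by the $\fv$-images of the semi-canonical generators, giving the reverse inclusion; the forward inclusion is trivial.

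The main obstacle I anticipate is the no-cancellation argument: one must verify that when writing an arbitrary element of $A(L)$ as a polynomial in the semi-canonical generators, the terms achieving the minimal $\fv$-value do not cancel, and hence that $\fv$ of the element equals the minimum of $\fv$ over those terms — i.e.\ that the semi-canonical generators ``carry'' the valuation. The homogeneity of $\fv$ reduces this to a fixed multidegree $u$, and within that the $\psi(u)$-contribution is constant, so the issue is entirely about $\ord_Q$ on the one-dimensional $L$. There, leading terms at a point of a curve are well-behaved: $\ord_Q(fg)=\ord_Q(f)+\ord_Q(g)$ with equality of leading coefficients, so the leading form of a product is the product of leading forms and cannot vanish. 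The remaining care is to ensure that the monomials realizing the extremal $\ord_Q$-value inside each graded piece $A(L)_u$ genuinely come from the semi-canonical generators and not from some monomial one is forced to discard — this is exactly where the hypothesis $Q\in L\setminus L^\circ$ (so that $Q=Q_i$ for some $i$, making $\ord_Q$ extremal on a face of the defining polytope cut out by the $\lfloor\Delta_i(u)\rfloor$ inequalities, which are precisely the inequalities defining $C^\vee$) is used, and where one invokes Proposition \ref{prop:ideal} and the explicit description of $A(C)$ to conclude. Finally, the ``in particular'' clause is immediate: a generating set of the value semigroup of a full rank valuation is by definition a Khovanskii basis.
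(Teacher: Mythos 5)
Your overall plan — reduce to a single multidegree $u$, use the description of $A(C)_u$ in terms of lattice points, and exploit that $\ord_Q$ on $\KK(L)\cong\KK(\PP^1)$ is a genuine valuation — is headed in the right direction, but the argument you sketch has a genuine gap at precisely the step you flag as the main obstacle, and I do not see how your proposed resolution closes it.

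The issue is your claim that ``$\ord_Q(f)$ is realized by the leading monomial of $f$ at $Q$'' together with the subsequent no-cancellation argument. When you expand $f\in H^0(L,\CO(D_u))$ as a $\KK$-linear combination of the monomials $z^v$ allowed in degree $u$, the summands are \emph{not} algebraically independent after restriction to $L$, and their leading terms at $Q$ can and do cancel. Concretely, take $m=2$, $L=V(y_0+y_1+y_2)\subset\PP^2$, so $z_1+z_2=-1$ on $L$, and $Q=Q_1=\{z_1=0\}$. Then $f=z_2+1$ restricts to $-z_1$ on $L$, so $\ord_{Q_1}(f)=1$, whereas $\min(\ord_{Q_1}(z_2),\ord_{Q_1}(1))=0$: the leading coefficients of the two monomials at $Q_1$ cancel. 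Your appeal to ``$\PP^1$ has a one-dimensional function field, so $\ord_Q$ is multiplicative'' correctly shows that $\fv$ is a valuation (equivalently, that $\fv$ behaves additively on \emph{products} of semi-canonical generators), but it says nothing about cancellation in \emph{sums}, which is where the obstruction lives. Also, your focus on the ``extremal'' monomials achieving $\sum v_i=\lfloor\Delta_0(u)\rfloor$ is misdirected: you need to realize \emph{every} value of $\ord_Q$ in the admissible range, not just the extremal one, and the sign of the extremizer is off (with $Q=Q_0$ one has $\ord_{Q_0}(z^v)=-\sum v_i$, so $\sum v_i=\lfloor\Delta_0(u)\rfloor$ \emph{minimizes} $\ord_{Q_0}$, it does not maximize it).

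The paper's proof sidesteps the cancellation problem entirely by never writing $f$ as a sum: given $f\cdot\chi^u\in A(L)_u$, set $v_i:=\ord_{Q_i}(f)$ for $i>0$. The section condition on $f$ gives $v_i\geq-\Delta_i(u)$, and $\deg\Div(f)=0$ together with $\ord_{Q_0}(f)\geq-\lfloor\Delta_0(u)\rfloor$ gives $\sum_{i>0} v_i\leq\Delta_0(u)$; hence $z^v\cdot\chi^u\in A(C)$ by Remark~\ref{rem:inac}, i.e.\ it is a monomial in the semi-canonical generators. Since the $Q_i$ are distinct, $\ord_{Q_1}(\overline{z^v})=v_1=\ord_{Q_1}(f)$, so $\fv(\overline{z^v\chi^u})=\fv(f\chi^u)$ by direct computation, and the value lies in the semigroup generated by the $\fv$-images of the generators. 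In your example this picks out $z_1$ rather than any of the monomials actually occurring in $f$. If you want to salvage your route, you would need to show the existence of an \emph{adapted} representation realizing the valuation — but that is essentially equivalent to the Khovanskii-basis statement you are trying to prove, whereas the direct construction is elementary and complete.
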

\begin{proof}
Consider any homogeneous element $f\cdot \chi^u\in A(L)$. The claims will follow if we can find a monomial in the semi-canonical generators whose valuation is the same as that of $f\cdot \chi^u$. 

Without loss of generality, assume that $Q=Q_1$. 
Now, $f\cdot \chi^u\in A(L)$ implies that 
\[
	v_i:=\ord_{Q_i} f \geq -\Delta_i(u)
\]
and
\[
	\sum_{i=1}^m v_i\leq \Delta_0(u).
\]
But the monomial $g=z^v\cdot \chi^u$ is then in $A(C)$, see Remark \ref{rem:inac}. This monomial is the image in $A(C)$ of a monomial in the semi-canonical generators. Furthermore if $\overline g$ is the image of $g$ in $A(L)$, we have
\[
	\fv(\overline g)=\fv(\overline{z^v})+\fv(\chi^u)=\fv(z_1^{v_1})+\fv(\chi^u)=\fv(f)+\fv(\chi^u)=\fv(f\cdot\chi^u).
\]

\end{proof}

\begin{cor}\label{cor:cone}
With $\fv$ as above and $Q=Q_j$, the value semigroup $S(R_X,\fv)$ is the image of 
\begin{equation}\label{eqn:sg}
\left\{ (u,v)\in (\sigma^\vee\cap M)\times \ZZ\ \big |\  -  \lfloor \Delta_j(u) \rfloor
\leq v \leq  \sum_{i\neq j} \lfloor \Delta_i(u) \rfloor \right\}
\end{equation}
under the map $\rho:M\times \ZZ \to \Gamma$ given by $(u,\lambda)\mapsto \psi(u)+\lambda\cdot \gamma$. In particular, if $\fv$ has full rank, then $S(R,\fv)$ is isomorphic to the semigroup of \eqref{eqn:sg}.
\end{cor}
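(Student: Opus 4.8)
The plan is to reduce the statement to Theorem \ref{thm:khovanskii} together with the explicit description of $A(C)$ and the valuation $\fv=\fv_{(\psi,Q_j,\gamma)}$. By Theorem \ref{thm:khovanskii}, the value semigroup $S(R_X,\fv)$ is generated by the valuations of the semi-canonical generators, hence it equals the image under $\fv$ of all monomials in these generators, i.e.\ of all monomials $z^v\cdot\chi^u\in A(C)$. So the first step is to compute $\fv(\overline{z^v\cdot\chi^u})$ for such a monomial. Using homogeneity and the fact (established in the proof of Theorem \ref{thm:khovanskii}) that $\ord_{Q_j}$ of the image in $A(L)$ of $z_i$ is zero for $i\neq j$ while $\ord_{Q_j}(z_j)$ contributes $\gamma$, one gets $\fv(\overline{z^v\cdot\chi^u})=\psi(u)+v_j\cdot\gamma=\rho(u,v_j)$ — here I should be slightly careful about the relation between the $z_i$-exponents and $\ord_{Q_i}$, using that after the normalization $Q=Q_1$ in that proof the monomial $z_j$ restricts to a function on $L$ with a simple zero at $Q_j$ and no zeros or poles at the other $Q_i$; the general-$j$ case is identical after reindexing.

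The second step is to identify the set of attainable pairs $(u,v_j)$. By Remark \ref{rem:inac}, $z^v\cdot\chi^u\in A(C)$ precisely when $v_i\geq -\Delta_i(u)$ for $i>0$ and $\sum_{i=1}^m v_i\leq \Delta_0(u)$, with $u\in\sigma^\vee\cap M$. For fixed $u$ and fixed value $v_j=:\lambda$ of the $j$th coordinate, such a monomial exists if and only if $\lambda\geq -\Delta_j(u)$ and one can choose integers $v_i\geq -\lfloor\Delta_i(u)\rfloor$ for $i\neq j$, $i>0$ (and $v_0$ is not a free coordinate here — recall $z_0=1$, so only $v_1,\dots,v_m$ are genuine exponents) with $\sum_{i=1}^m v_i\leq \Delta_0(u)$. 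Minimizing $\sum_{i\ne j} v_i$ by taking $v_i=-\lfloor\Delta_i(u)\rfloor$, this is possible exactly when $\lambda - \sum_{i\neq j,\,i>0}\lfloor\Delta_i(u)\rfloor\leq \Delta_0(u)$, i.e.\ $\lambda\leq \lfloor\Delta_0(u)\rfloor+\sum_{i\neq j,\,i>0}\lfloor\Delta_i(u)\rfloor=\sum_{i\neq j}\lfloor\Delta_i(u)\rfloor$, and similarly the lower bound $\lambda\geq -\Delta_j(u)$ can be sharpened to $\lambda\geq -\lfloor\Delta_j(u)\rfloor$ since $\lambda$ is an integer. This is exactly the description \eqref{eqn:sg}, so $S(R_X,\fv)=\rho\big(\text{set of }\eqref{eqn:sg}\big)$.

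For the last sentence, when $\fv$ has full rank the map $\rho\colon M\times\ZZ\to\Gamma$ is injective on the relevant set: full rank means $\psi$ is injective (of full rank on $M$) and no nonzero multiple of $\gamma$ lies in the image of $\psi$, so $\psi(u)+\lambda\gamma=\psi(u')+\lambda'\gamma$ forces $\lambda=\lambda'$ and then $u=u'$. Hence $\rho$ restricts to a semigroup isomorphism from the semigroup \eqref{eqn:sg} onto $S(R_X,\fv)$.

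The main obstacle I anticipate is purely bookkeeping: getting the floor functions and the role of the omitted coordinate $z_0$ exactly right when passing from the membership conditions of Remark \ref{rem:inac} (which are stated with the real-valued $\Delta_i(u)$ on one side and free integer exponents on the other) to the clean integral inequalities of \eqref{eqn:sg}. Concretely one must check that the extremal choice $v_i=-\lfloor\Delta_i(u)\rfloor$ is simultaneously admissible and optimal, and that replacing $\Delta_j(u)$ and $\Delta_0(u)$ by their floors in the bounds on $\lambda$ introduces no spurious lattice points and omits none. Everything else — homogeneity of $\fv$, the computation of $\fv$ on monomials, the injectivity argument — is immediate from the preceding results.
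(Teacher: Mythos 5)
Your proposal is correct and follows essentially the same route as the paper's proof: both rely on Theorem \ref{thm:khovanskii} to reduce to computing $\fv$ on the monomials $z^v\chi^u\in A(C)$, then use the membership conditions of Remark \ref{rem:inac} to identify the attainable values with \eqref{eqn:sg}. The paper packages the bookkeeping more compactly by first observing that $\fv=\rho\circ\fv_{(\id_M,Q_j,(0,1))}$ and computing the value semigroup of the universal valuation $\fv_{(\id_M,Q_j,(0,1))}$ (with target $M\times\ZZ$) to be exactly \eqref{eqn:sg}, but the underlying computation — including the floor-function optimization over the free coordinates $v_i$, $i\neq j$ — is what you carry out explicitly; your remark on the $j=0$ case is most cleanly handled by introducing $v_0:=-\sum_{i\geq 1}v_i$ so that the conditions $v_i\geq -\Delta_i(u)$ and $\ord_{Q_j}(\overline{z^v})=v_j$ hold uniformly in $j$.
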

\begin{proof}
	It is straightforward to check that \eqref{eqn:sg} is exactly the image of the monomials in the semi-canonical generators under the valuation $\fv_{(\id_M,Q_j,(0,1))}$. The claim follows from the theorem, since 
	\[
\fv=\rho\circ \fv_{(\id_M,Q_j,(0,1))}.
	\]
\end{proof}

\begin{rem}
	The restriction in Theorem \ref{thm:khovanskii} and Corollary \ref{cor:cone} that $Q\in L\setminus L^\circ$ is not a significant one: for any point $Q\in L$, one can re-embed $L$ in a larger projective space so that the points of this new line outside the torus contain the original points $Q_0,\ldots,Q_m$ of $L$ together with $Q$. In particular, for any full rank homogeneous valuation on $R_X$, there is a semi-canonical embedding such that the semi-canonical generators are a Khovanskii basis.
\end{rem}

\begin{ex}
We continue Examples \ref{ex:d6} and \ref{ex:d62} for $X$ the $D_6$ singularity. Consider the valuation $\fv=\fv_{(\psi,Q_0,\gamma)}$ with $\psi:\ZZ\to \Gamma=\ZZ^2$ the inclusion of the first factor, and $\gamma$ the second standard basis vector of $\ZZ^2$. Then by Corollary \ref{cor:cone}, the value semigroup $S(R_X,\fv)$ equals 
\begin{equation*}
\left\{ (u,v)\in \ZZ_{\geq 0}\times \ZZ\ \big |\    \frac{-3u}{2} 
\leq v \leq  2\left\lfloor \frac{-u}{2} \right\rfloor \right\}.
\end{equation*}
Generators are given by the valuations of the images of
\[\chi^{3}z_1^2z_2^2,\ \chi^{2}z_1^1z_2^2,\ \chi^{2}z_1^2,z_2^1,\ \chi^{2}z_1^1z_2^1,\] which are respectively $(3,-4)$, $(2,-3)$, $(2,-3)$, and $(2,-2)$, see Figure \ref{fig:d6}.
  \end{ex}
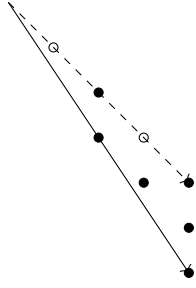
\begin{figure}
  \begin{tikzpicture}[scale=.6]
    \draw [->] (0,0) -- (4,-6);
    \draw [dashed,->] (0,0) -- (4,-4);
    \draw (1,-1) circle [radius=0.1];
    \draw (3,-3) circle [radius=0.1];
    \draw[fill] (2,-2) circle [radius=0.1];
    \draw[fill] (2,-3) circle [radius=0.1];
    \draw[fill] (3,-4) circle [radius=0.1];
    \draw[fill] (4,-4) circle [radius=0.1];
    \draw[fill] (4,-5) circle [radius=0.1];
        \draw[fill] (4,-6) circle [radius=0.1];
    \end{tikzpicture}
\caption{Value semigroup for $D_6$ singularity}\label{fig:d6}
\end{figure}

\subsection{Connections to Tropical Geometry}\label{sec:tropval}

Let $X\subset \Aff^n$ be an affine variety, and fix a presentation of its coordinate ring:

$$
\begin{CD}
0 @>>> I_X @>>> \KK[\bx] @>\pi>> R_X @>>> 0.\\
\end{CD}
$$

In \cite[\S 3,4]{Kaveh-Manon-NOK}, Kaveh and the second author describe certain \emph{weight valuations} $\fv_W: R_X \setminus \{0\} \to \ZZ^r$ corresponding to a special choice of matrix $W$. The $n$ columns of $W$ are chosen in bijection with the variables $\bx$ from the group $\ZZ^r$ taken with its standard lexicographic ordering.  This choice defines a natural $\ZZ^r$-weighting of the monomials in $\KK[\bx]$, in particular $\bx^{\alpha}$ is assigned $W\alpha \in \ZZ^r$.   The value $\fv_W(f)$ for $f \in R_X$ is then computed by the following formula:

\begin{equation*}
\fv_W(f) = \max\{\min\{ W\alpha \ \mid \ p(\bx) = \sum C_{\alpha}\bx^{\alpha},  C_{\alpha} \neq 0\} \ \mid \ p(\bx) \in\pi^{-1}(f)\}.
\end{equation*}

One may also associate an initial ideal to the matrix $W$:
\[
\In_W(I_X)=\langle \In_W(f)\ |\ f\in I\rangle
\]
where for $f=\sum c_\alpha \bx^\alpha$, $\In_W(f)$ is the sum of monomial terms $c_\alpha\bx^\alpha$ for which $W\alpha$ is minimal.
This leads to the notion of \emph{higher rank} tropical varieties:
\[
\trop^r(I_X)=\{W\in \QQ^{r\times n}\ |\ \In_W(I_X)\ \textrm{contains no monomials}\}.
\]
Note that our definition of $\In_W(I_X)$ extends naturally to matrices $W$ with entries in $\QQ$.

If $R_1, \ldots, R_r$ are the rows of $W$, \cite[Lemma 3.8 and Proposition 3.16]{Kaveh-Manon-NOK} imply that:
\begin{enumerate}
\item $\In_W(I_X) = \In_{R_r}(\ldots \In_{R_1}(I_X)\ldots )$,
\item $W\in\trop^r(I_X)$ if and only if $R_i \in \trop(\In_{R_{i-1}}(\ldots \In_{R_1}(I_X)\ldots ))$ for all $i$.
\end{enumerate}

In general, the function $\fv_W$ is only a quasi-valuation (see \cite[\S 4]{Kaveh-Manon-NOK}): it only holds that $\fv_W(fg)\geq \fv_W(f)+\fv_W(g)$ instead of the usual equality.
However, $\fv_W$ defines a valuation of rank equal to the rank of $W$ provided the associated initial ideal $\In_W(I_X)$ is a prime ideal. Indeed, \cite[Lemma 4.4]{Kaveh-Manon-NOK} tells us that in this situation, the associated graded algebra to $\fv_W$ is a domain, which in turn implies that $\fv_W$ is a valuation.  Under mild conditions, the columns of $W$ are equal to the values of $\fv_W$ on the chosen generators of $R_X$; this is in particular the case if $W$ is a member of the higher rank tropical variety $\trop^r(I_X)$, see \cite[Proposition 4.6(4)]{Kaveh-Manon-NOK}.

If $W\in\trop^r(I_X)$ and $\In_W(I_X)$ is prime, then it follows from the above that the initial ideal $\In_W(I_X)$ coincides with the initial ideal of a prime cone in $\trop(X^\circ)$.  As the following theorem demonstrates, the situation is somewhat simplified for affine complexity-one $T$-varieties and their semi-canonical embeddings.

\begin{thm}\label{thm:chris}
	Let $X$ be a rational affine complexity-one $T$-variety, and $\fv: R_X \to \ZZ^r$ a full rank valuation. The following are equivalent: 

\begin{enumerate}
\item The valuation $\fv$ is $M$-homogeneous;
\item There exists a semi-canonical embedding of $X$ such that the semi-canonical generating set is a Khovanskii basis for $\fv$;
\item The valuation $\fv$ equals $\fv_W$ for some $W \in \trop^r(J(L))$ with $rank(W) = r$, where $J(L)$ is the ideal of forms vanishing on a semi-canonical generating set of $R_X=A(L)$.
\end{enumerate}

\noindent
In the case that the above conditions hold, $\fv|_{\KK(L)} = \ord_Q\cdot \gamma$ for some $\gamma \in \ZZ^r$ and some $Q=Q_i \in L \setminus L^\circ$. The initial ideal $\In_W(J(L))$ coincides with the initial ideal of the facet $C_i \subset \trop(J(L))$.  Furthermore, the value semigroup $S(R_X, \fv)$ is generated by the columns of $W$.  
\end{thm}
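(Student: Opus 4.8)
The plan is to prove the cycle of implications $(1)\Rightarrow(2)\Rightarrow(3)\Rightarrow(1)$, and then extract the final assertions from the proof of $(2)\Rightarrow(3)$ and $(3)\Rightarrow(1)$. For $(1)\Rightarrow(2)$: given a full rank $M$-homogeneous valuation $\fv$, Proposition \ref{prop:homog} writes $\fv=\fv_{(\psi,Q,\gamma)}$ for some $Q\in L$, and the full rank condition forces $\gamma$ not to be a torsion multiple into $\im\psi$. If $Q\notin L^\circ$, Theorem \ref{thm:khovanskii} immediately says the semi-canonical generators form a Khovanskii basis. If $Q\in L^\circ$, we invoke the remark following Corollary \ref{cor:cone}: re-embed $L$ into a larger $\PP^{m'}$ so that $Q$ becomes one of the new coordinate points $Q_i$, giving a new semi-canonical embedding of the same $X$ to which Theorem \ref{thm:khovanskii} applies. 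So after this re-embedding we may always assume $Q=Q_i\in L\setminus L^\circ$, which also sets up the last paragraph of the statement.

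For $(2)\Rightarrow(3)$: suppose a semi-canonical embedding $X=X(L)\subset\Aff^n$ with ideal $J(L)\subset S$ has its semi-canonical generators $\{x_u\}_{u\in H}$ forming a Khovanskii basis for $\fv$. Define $W$ to be the matrix whose column indexed by $u\in H$ is $\fv(x_u)\in\ZZ^r$. The key point is to show $\fv=\fv_W$ and $W\in\trop^r(J(L))$. First, because $\fv$ is $M$-homogeneous, Proposition \ref{prop:homog} gives $\fv=\fv_{(\psi,Q,\gamma)}$, and by the reduction above $Q=Q_i$; one then checks that for each $u=(u_0,v)\in H\subset M\times\ZZ^m$, $\fv(x_u)=\psi(u_0)+v_i\gamma$, i.e.\ $W$ is the composition of $\phi$ from Proposition \ref{prop:tropmap} (restricted to $H$) with the linear map $(u_0,v)\mapsto\psi(u_0)+v_i\gamma$ — so $W$ literally factors through the tropicalization map, hence $W$ as a row-space lies in the linear span of $\trop(X(L)^\circ)$. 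The associated graded of $\fv$ is then the coordinate ring of the variety obtained by the semi-canonical construction applied to $L_{w}$ for the corresponding $w\in\trop(X(L)^\circ)$, which by Theorem \ref{thm:poised} is $\In_w(J(L))$ — a prime ideal, and in particular monomial-free, so $W\in\trop^r(J(L))$. Since the semi-canonical generators are a Khovanskii basis and $\In_W(J(L))$ is prime, \cite[Lemma 4.4, Proposition 4.6(4)]{Kaveh-Manon-NOK} give that $\fv_W$ is a genuine valuation whose values on the generators are the columns of $W$; since $\fv$ and $\fv_W$ agree on a generating set and both are valuations, $\fv=\fv_W$. The value semigroup claim $S(R_X,\fv)=\langle\text{columns of }W\rangle$ is then exactly the Khovanskii basis property, and the identification of $\In_W(J(L))$ with the initial ideal of the facet $C_i$ follows because $w$ lies in the relative interior of $C_i=\phi(\rho_i\times N_\RR)$ (this is where $Q=Q_i$ enters: the order-of-vanishing direction at $Q_i$ corresponds to the ray $\rho_i$).

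For $(3)\Rightarrow(1)$: if $\fv=\fv_W$ with $W\in\trop^r(J(L))$ of rank $r$, then by property (2) of the higher-rank tropical variety recalled before Theorem \ref{thm:chris}, $W$ lands inside $\trop(X(L)^\circ)$ (iterating the structure result and using that each intermediate initial ideal is monomial-free); since $\trop(X(L)^\circ)=\phi(N_\RR\times\trop(L^\circ))$ by Proposition \ref{prop:tropmap}, $W$ factors through $\phi$, which is precisely the statement that the induced valuation is constant on $M$-graded pieces up to the $\psi$-shift — i.e.\ $M$-homogeneous. More carefully: $\In_W(J(L))$ is prime by Theorem \ref{thm:poised} (it equals $\In_w(J(L))=J(L_w)$), so $\fv_W$ is a valuation and its associated graded ring is $A(L_w)$, which carries an $M$-grading compatible with the $T$-action; unwinding, $\fv_W=\fv_{(\psi,Q_i,\gamma)}$ for the data read off from $w$, hence homogeneous. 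I expect the main obstacle to be the bookkeeping in $(2)\Rightarrow(3)$ — precisely matching the weight matrix $W$ (living on the polynomial ring $S$) against the valuation $\fv_{(\psi,Q,\gamma)}$ through the two different ambient objects $X(C)$ and $\Aff^n$, and verifying that the point $Q$ is forced to be one of the $Q_i$ and corresponds to the facet $C_i$; the algebraic inputs from \cite{Kaveh-Manon-NOK} (that primeness of $\In_W$ upgrades the quasi-valuation to a valuation, and that its values on generators are the columns of $W$) handle the rest more or less formally once the tropical picture is set up.
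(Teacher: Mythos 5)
Your cycle $(1)\Rightarrow(2)\Rightarrow(3)\Rightarrow(1)$ matches the paper's, and your $(1)\Rightarrow(2)$ is essentially identical: decompose $\fv$ via Proposition \ref{prop:homog}, re-embed $L$ so $Q$ becomes a boundary point, invoke Theorem \ref{thm:khovanskii}. The other two implications, however, have gaps or take a shakier route than the paper.

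In $(2)\Rightarrow(3)$, you try to show $W\in\trop^r(J(L))$ by identifying $\In_W(J(L))$ with $\In_w(J(L))$ for a single rank-one weight $w$ and appealing to Theorem \ref{thm:poised}, but this identification --- essentially the statement that $\In_W(J(L))$ equals the initial ideal of a facet $C_i$ --- is one of the claims you are supposed to prove, and Theorem \ref{thm:poised} only handles rank-one weights. Likewise, ``the associated graded of $\fv$ is $A(L_w)$'' is plausible but unproven, and ``since $\fv$ and $\fv_W$ agree on a generating set and both are valuations, $\fv=\fv_W$'' is false in general (two valuations can agree on algebra generators without being equal); you would need to invoke the Khovanskii basis property for \emph{both} valuations simultaneously, or, as the paper does, cite \cite[Theorem 2.16, Lemma 4.10]{Kaveh-Manon-NOK} which directly produces $\fv=\fv_W$ from subductivity. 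For $W\in\trop^r(J(L))$ the paper has a much more elementary argument that avoids your circularity: for any $f=\sum c_\alpha x^\alpha\in J(L)$ we have $\sum c_\alpha\pi(x^\alpha)=0$ in $R_X$, so the valuation must achieve its minimum on at least two terms; since $\fv(\pi(x^\alpha))=W\alpha$ (additivity plus the definition of $W$), $\In_W(f)$ cannot be a monomial.

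In $(3)\Rightarrow(1)$, your ``$W$ factors through $\phi$, hence homogeneous'' compresses what actually needs an argument: the rows $R_2,\dots,R_r$ of $W$ are a priori only in $\trop(\In_{R_{i-1}}(\dots))$, not in $\trop(J(L))$ itself, so they do not obviously lie in $\im\phi$. The paper instead writes $W=W_1+W_2$ with $W_1$ the $\KK[M]$-contribution and $W_2$ the $\ord_Q$-contribution, shows $\rank W_1=r-1$ with $\gamma$ independent of it, and deduces homogeneity by noting distinct $u,v\in M$ give distinct values. For the final claims the paper then pins down $Q=Q_i$ by observing that the first row $R_j$ with nonzero $\gamma$-component has the form $\phi(w+s\cdot(\ord_Q(y_1),\dots,\ord_Q(y_m)))$, and shows $\In_W(J(L))=\In_{R_j}(J(L))$ because the earlier rows are in the lineality space of $J(L)$ and the later rows in the lineality space of $\In_{R_j}(J(L))$ --- a level of precision your sketch gestures at but does not supply.
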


\begin{proof}
${\bf (1 \to 2)}$ According to Proposition \ref{prop:homog},  $\fv|_{\KK(L)} = ord_Q\cdot \gamma$ for some $\gamma \in \Gamma$. We can re-embed $L$ in a projective space so that the boundary points $L \setminus L^\circ$ contain $Q$. Theorem \ref{thm:khovanskii} completes the claim.  

${\bf (2 \to 3)}$  If a semi-canonical generating set $\mathcal{B} \subset A(L)$ is a Khovanskii basis, \cite[Theorem 2.16]{Kaveh-Manon-NOK} implies that $\fv$ is a \emph{subductive valuation}. Then \cite[Lemma 4.10]{Kaveh-Manon-NOK} implies that $\fv = \fv_W$ for $W$ a weight matrix of rank equal to the rank of $\fv$. In fact, the columns of $W$ are just given by $\fv(b)$, as $b$ ranges over the elements of $\mathcal{B}$. 
Furthermore, $W\in \trop^r(J(L))$, where $J(L)$ is the ideal of forms vanishing on the given semi-canonical generating set. Indeed, given any $f=\sum c_\alpha x^\alpha \in J(L)$, we have 
\[0=\sum c_\alpha \pi(x^\alpha),\] where $\pi:\KK[x]\to R_X$ is the projection induced by the elements of $\mathcal{B}$. Then $\fv$ must obtain its minimum twice on the terms in the right hand side of this expression, so $W$ must obtain its minimum twice on the exponents occuring in $f$. Hence $\In_W(f)$ cannot be a monomial.

${\bf (3 \to 1)}$ Any element of a semi-canonical generating set is of the form $f\chi^u$ for $f \in \KK(L)$ and $u \in M$.  We may thus write $W = W_1 + W_2$, where the columns of $W_1$ are the values $\fv_W(\chi^u)$ for $\chi^u \in \KK[M]$ and the columns of $W_2$ are of the form $s\cdot \gamma$ for some fixed $\gamma \in \ZZ^r$ and $s = \ord_Q(f)$ for $Q \in L \setminus L^\circ$.  It follows that $W_1$ must have rank $r-1$ and $\gamma$ must be independent of the span of $W_1$.  Considering any  $u\neq  v \in M$, the above description of $W$ implies that the values of two homogeneous elements $f\chi^u, g\chi^{v}$ are distinct; it follows that $\fv_W$ is $M$-homogeneous.

  We now show the remaining claims. Note first that we must have $\In_W(J(L)) = \In_{R_r}(\ldots \In_{R_1}(J(L))\ldots )$.  Let $j$ be the index of the first non-zero entry of $\gamma$. Then the row $R_j$ is of the form $\phi(w+s(\ord_Q(y_1), \ldots, \ord_Q(y_m)))$ for some $s\neq 0$ and $w\in N$, where $\phi$ is the map from Proposition \ref{prop:tropmap}.  It follows that $Q=Q_i$ for some $i$, otherwise $\ord_Q(y_i)=0$ for all $i$. Then  $R_j \in C_i \subset \trop(J(L))$.  Furthermore, each row $R_{j'}$ with $j' < j$ is in the lineality space of $J(L)$, and each row $R_{j'}$ with ${j'} > j$ is in the lineality space of $\In_{R_j}(J(L))$; it follows that $\In_W(J(L)) = \In_{R_j}(J(L))$.  
\end{proof}

\subsection{Graded Rings and Newton-Okounkov Bodies}\label{sec:flag}
Suppose that our finitely-generated $\KK$-domain $R$ is graded by $\ZZ_{\geq 0}$. Consider a homogeneous valuation $\fv:R\setminus\{0\}\to \ZZ\times \Gamma$ such that for any homogeneous $f\in R$ of degree $k$, the first coordinate of $\fv(f)$ is exactly $k\in\ZZ$. Then the \emph{Newton-Okounkov Body} of $R$ with respect to $\fv$ is
\[
\Delta(R,\fv)=\pi_2(C(R,\fv)\cap \pi_1^{-1}(1))
\]
where $\pi_1,\pi_2$ are the projections of $(\ZZ\times \Gamma)_\RR$ to $\ZZ_\RR$ and $\Gamma_\RR$. This is a convex set, and under some mild hypotheses (including that $R_0=\KK$), it is compact, see e.g. \cite{KK}.

Such a situation occurs when considering a divisor $D$ on a projective variety $X$. Given any $\Gamma$-valued valuation $\fv$ on $\KK(X)$, one constructs a homogeneous valuation $\fv_D$ on  
\[
R(\CO(D))=\bigoplus_{k\in \ZZ_{\geq 0}} H^0(X,\CO(kD))
\]
as follows: for a section $s\in H^0(X,\CO(kD))$ we have
\[
\fv_{D}(s)=(k,\fv(s))\in\ZZ\times\Gamma.
\]
The Newton-Okounkov body of $D$ with respect to $\fv$ is $\Delta(R(\CO(D),\fv_D)$.

For $X$ a $d$-dimensional projective variety, a distinguished class of full rank valuations on $\KK(X)$ comes from a choice of full flag $\F_0\subset\F_1\subset\ldots\subset\F_d=X$ of irreducible subvarieties of $X$,
 where the point $\mathcal{F}_0$ is a smooth point in each $\mathcal{F}_i$  
(see \cite[\S 1.1]{LM}, \cite[Example 2.13]{KK}).
  For $f \in \KK(X)$ one computes the value $\fv_{\mathcal{F}}(f) = (a_1, \ldots, a_d) \in \ZZ^d$ recursively, where  $\ZZ^d$ is endowed with the lexicographic ordering.  The first component $a_1$ is taken to be the order of vanishing of $f$ along the divisor $\mathcal{F}_{d-1}$.  It follows that if $s$ is a local equation for $\mathcal{F}_{d-1}$ at $p$, $s^{-a_1}f$ can be regarded as a non-zero rational function on $\mathcal{F}_{d-1}$. This allows the procedure to be repeated with the divisor $\mathcal{F}_{d-2} \subset \mathcal{F}_{d-1}$ to produce $a_2$, and so on until the process terminates with $a_d$, the order of vanishing at $\F_0$.   

In the case of such a valuation $\fv_\F$, it is customary to extend the valuation to a homogeneous valuation $\fv_{\F,D}'$ on  $R(\CO(D))$ as follows: if $f_D$ is a local equation for $D$ at the point $\F_0$, for a section $s\in H^0(X,\CO(kD))$ we have
\[
\fv_{\F,D}'(s)=(k,\fv_\F(s)+k\fv_\F(f_D))\in\ZZ\times\ZZ^d.
\]
This is exactly the valuation considered in \cite{LM}.
It may be obtained from the valuation $(\fv_\F)_D$ considered above by applying an invertible linear transformation. 
The advantage of this modified valuation $\fv_{\F,D}'$ is that the Newton-Okounkov body $\Delta(R(\CO(D)),\fv_{\F,D}')$ depends only on the numerical equivalence class of $D$ \cite[Proposition 4.1]{LM}.

\begin{rem}
Given a projective variety $X$, flag $\F$ as above, and ample divisor $D$, Anderson obtains a degeneration of $X$ to 
\[\proj \KK[S\left(R(\CO(D)),\fv_{\F,D}'\right)]\]
provided that $R(\CO(D))$ has a finite Khovanskii basis with respect to the valuation $\fv_{\F,D}'$ \cite{anderson}. A careful reading of \cite[\S5]{anderson} shows that Anderson's degeneration is the Gr\"obner degeneration associated to a weight vector $w$ he constructs using a method which goes back to Caldero.
In particular, if $X$ is a rational complexity-one $T$-variety and the Khovanskii basis used in Anderson's construction is a set of semi-canonical generators for $R(\CO(D))$, then Anderson's degeneration agrees with one of the Gr\"obner degenerations we've discussed in \S\ref{sec:trop}. 
\end{rem}

\subsection{Newton-Okounkov Bodies for Rational Complexity-One $T$-Varieties}\label{sec:petersen}
Let $X$ be a projective rational complexity-one $T$-variety and $D$ a $T$-invariant divisor. Then $\spec R(\CO(D))$ is an affine rational complexity-one $T\times\KK^*$-variety. The polyhedral divisor $\D=\sum_{i=0}^m \Delta_i P_i$ encoding it is described in \cite[Example 2.5]{polarized}. Note that in the rational complexity-one situation, the description there applies to arbitrary invariant divisors, not just very ample ones, after one applies the correction found in \S6 of loc.~cit. The polyhedral divisor $\D$ (and its tailcone $\sigma$) live in $(\ZZ\times N)_\QQ$; we set $\Box_D=\{u\in M_\RR\ | (1,u)\in \sigma^\vee\}$.

Now, if $\fv$ is a valuation on $\KK(X)$ which is homogeneous with respect to the grading of the character group $M$ of $T$, then the valuation $\fv_D$ on $R(\CO(D))$ is $\ZZ\times M$-homogeneous, and the results of \S\ref{sec:homog} apply.

\begin{thm}\label{thm:no}
Let $\fv=\fv_{(\psi,Q,\gamma)}$ be an $M$-homogeneous valuation on $\KK(X)$ with $Q=Q_j$. Then the Newton-Okounkov body of $D$ with respect to $\fv$ is the image of 
\begin{equation}
\left\{ (u,v)\in \Box_D\times \RR\ \big |\  -   \Delta_j(1,u) 
\leq v \leq  \sum_{i\neq j} \Delta_i(1,u)  \right\}
\end{equation}
under the map $\rho:(M\times \ZZ)_\RR \to \Gamma_\RR$ given by $(u,\lambda)\mapsto \psi(u)+\lambda\cdot \gamma$.
\end{thm}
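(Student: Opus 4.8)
The plan is to reduce the statement to Corollary \ref{cor:cone} by passing through the affine cone $\spec R(\CO(D))$, exactly as Corollary \ref{cor:cone} is applied in the graded setting of \S\ref{sec:flag}. First I would observe that $\spec R(\CO(D))$ is the affine rational complexity-one $T\times\KK^*$-variety encoded by the polyhedral divisor $\D = \sum_{i=0}^m \Delta_i P_i$ on $\PP^1$, with tailcone $\sigma \subset (\ZZ\times N)_\QQ$, as recalled just before the theorem from \cite[Example 2.5]{polarized}. The valuation $\fv_D$ on $R(\CO(D))$ is $\ZZ\times M$-homogeneous, so by Proposition \ref{prop:homog} it is of the form $\fv_{(\widetilde\psi, Q, \gamma)}$ for the homomorphism $\widetilde\psi:\ZZ\times M\to \ZZ\times\Gamma$ sending $(k,u)\mapsto (k,\psi(u))$ and the same point $Q=Q_j$; the first-coordinate condition defining a graded valuation (degree $k$ maps to first coordinate $k$) is exactly what pins down the $\ZZ$-component of $\widetilde\psi$ and matches the normalization used to define the Newton-Okounkov body.

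Next I would apply Corollary \ref{cor:cone} to $R(\CO(D))$ with this valuation and $Q=Q_j$: the value semigroup $S(R(\CO(D)),\fv_D)$ is the image under the map $(k,u,\lambda)\mapsto (k,\psi(u)) + \lambda\cdot\gamma$ of the integral points of the cone
\[
\left\{ ((k,u),v)\in (\sigma^\vee\cap (\ZZ\times M))\times \ZZ\ \big |\  -  \lfloor \Delta_j(k,u) \rfloor \leq v \leq  \sum_{i\neq j} \lfloor \Delta_i(k,u) \rfloor \right\}.
\]
Now I take the closed positive hull $C(R(\CO(D)),\fv_D)$ and intersect with the slice $k=1$. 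Taking real positive hull commutes with the piecewise-linear maps $\Delta_i$ (which are positively homogeneous), so the floor functions disappear in the limit and the $k=1$ slice of the cone becomes precisely
\[
\left\{ (u,v)\in \Box_D\times \RR\ \big |\  -   \Delta_j(1,u) \leq v \leq  \sum_{i\neq j} \Delta_i(1,u)  \right\},
\]
using $\Box_D = \{u\in M_\RR\mid (1,u)\in\sigma^\vee\}$ by definition. Projecting away the first coordinate and pushing through $\rho$ (the restriction to $k=1$ of the linear map above) then gives $\Delta(R(\CO(D)),\fv_D)$, which is the Newton-Okounkov body of $D$ with respect to $\fv$ by the definitions in \S\ref{sec:flag}.

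The main technical point to be careful about — and the step I expect to require the most attention — is the passage from the semigroup description to the cone description at the slice $k=1$: one must check that $\overline{\QQ_{\geq 0}\cdot S(R(\CO(D)),\fv_D)}$ intersected with $\pi_1^{-1}(1)$ equals the stated polyhedron, i.e.\ that taking rational (then real) positive hull before versus after removing the floors yields the same set, and that the image-under-$\rho$ operation commutes with taking positive hull and with the slicing. This is where the positive homogeneity of the $\Delta_i(\cdot)$, the fact that every integral point of $\sigma^\vee$ with first coordinate $k$ large is covered, and compactness of $\Box_D$ (so that the slice is bounded, hence the Newton-Okounkov body compact) all get used. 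A secondary bookkeeping point is matching the normalization: verifying that the valuation $\fv_D$ built from $\fv$ in \S\ref{sec:flag} agrees, under the identification of $R(\CO(D))$ with a semi-canonical coordinate ring $A(L_\D)$ for a line $L_\D$ in a suitable $\PP^m$, with $\fv_{(\widetilde\psi,Q_j,\gamma)}$ in the sense of \S\ref{sec:homog}, and that (after re-embedding $L$ if necessary so that $Q\in L\setminus L^\circ$) the hypothesis $Q=Q_j$ of Corollary \ref{cor:cone} is genuinely met.
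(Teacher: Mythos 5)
Your proposal is correct and takes essentially the same route as the paper: identify $\fv_D$ as the $\ZZ\times M$-homogeneous valuation $\fv_{(\psi',Q,(0,\gamma))}$ with $\psi'(k,u)=(k,\psi(u))$, apply Corollary \ref{cor:cone}, then pass to the degree-one slice of the positive hull. The only difference is that the paper leaves the slicing and the disappearance of the floor functions implicit, while you spell them out; the one small notational slip is that the third component of the valuation triple should be $(0,\gamma)\in\ZZ\times\Gamma$ rather than $\gamma$, as your own formula $(k,u,\lambda)\mapsto(k,\psi(u))+\lambda\cdot\gamma$ already implicitly corrects.
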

\begin{proof}
  The valuation $\fv_D$ is the $\ZZ\times M$-homogeneous valuation $\fv=\fv_{(\psi',Q,(0,\gamma))}$
with $\psi'(k,u)=(k,\psi(u))$ for $k,u\in \ZZ\times M$. The claim now follows from Corollary \ref{cor:cone}.
  \end{proof}

  In \cite{petersen}, Petersen describes Newton-Okounkov bodies for invariant divisors on projective complexity-one $T$-varieties $X$ with respect to valuations $\fv_\F$ coming from $T$-invariant flags. Let $\pi:X\dashrightarrow \PP^1$ be the rational quotient map, and let $j$ be the largest index for which $\pi(\F_j)$ is a point $Q\in \PP^1$. Then a local computation shows that $\fv_\F=\fv_{(\psi,Q,\gamma)}$ for some $\psi,\gamma$ as in \S\ref{sec:homog}. Having thus determined $Q$, Theorem \ref{thm:no} allows us to describe $\Delta(D,\fv_\F)$, up to linear isomorphism. Comparing with \cite[Theorem 3.9, Propositions 3.13 and 3.15]{petersen}, we see that we recover the same Newton-Okounkov bodies described by Petersen, up to linear isomorphism.

  \subsection{Global Newton-Okounkov Bodies}\label{sec:global}
  Let $X$ be a smooth $d$-dimensional projective variety, and $\F$ a flag as in \S\ref{sec:flag}.
  Fix a basis $D_1,\ldots,D_r$ of divisors of $N^1(X)$, the group of divisors on $X$ modulo numerical equivalence.

  \begin{defn}\cite[\S 4]{LM}
	  The \emph{global Newton-Okounkov body} $\Delta(X,\F)$ of $X$ is the closure  in $\RR^d\times\RR^r$ of the positive hull of all points in $\ZZ^d\times \ZZ^r$ of the form 
	  \[\left(\fv_{\F,D}'(H^0(X,\CO(D))),u\right),\]
	  where $u\in \ZZ^r$ and $D=\sum_i u_iD_i$.
  \end{defn}
  The cone $\Delta(X,\F)$ is a cone projecting onto the pseudo-effective cone of $X$. This cone $\Delta(X,\F)$ is independent of the choice of basis of $N^1(X)$, and for any big class $\xi\in N^1(X)$, the Newton-Okounkov body of $\xi$ (with respect to $\F$) is the fiber over $\xi$ of this projection \cite[Theorem 4.5]{LM}.

  In \cite[Theorem 5.1]{petersen}, Petersen shows that $\Delta(X,\F)$ is rational polyhedral whenever $X$ is a complexity-one $T$-variety. We readily recover that result here under the additional assumption that $X$ is rational: 
  \begin{thm}
For any smooth projective rational complexity-one $T$-variety $X$ and any flag $\F$ as in \S\ref{sec:flag}, the global Newton-Okounkov body $\Delta(X,\F)$ is rational polyhedral.
  \end{thm}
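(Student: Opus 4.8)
The idea is to reduce the global statement to a finite union of the local descriptions provided by Theorem \ref{thm:no}. Fix a basis $D_1,\ldots,D_r$ of $N^1(X)$. Since $X$ is a complexity-one $T$-variety, the pseudo-effective cone of $X$ is rational polyhedral (this follows, e.g., from the description of the Cox ring via polyhedral divisors as in \S\ref{sec:similar} and Remark \ref{rem:cox}, or from finiteness of generators for the monoid of effective classes); decompose it into a finite fan of rational polyhedral subcones on each of which the assignment $u\mapsto \D_u$ (the polyhedral divisor encoding $\spec R(\CO(\sum u_iD_i))$ from \cite[Example 2.5]{polarized}) depends \emph{linearly} on $u$ in the following sense: the tailcone $\sigma$ is constant and each vertex of each coefficient polyhedron $\Delta_i$ varies linearly with $u$. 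Such a decomposition exists because the coefficients of $\D$ are obtained from piecewise-linear data in $u$, so there is a finite-index subdivision making everything linear. It is on each such chamber $\tau$ that we will check polyhedrality.

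Next, fix the flag $\F$ and let $j$ be the largest index with $\pi(\F_j)$ a point $Q\in\PP^1$, exactly as in \S\ref{sec:petersen}; so $Q=Q_{i_0}$ for some fixed $i_0$ independent of $u$ once we are working with lines $L_\D$ chosen compatibly (or after re-embedding so that $Q\in L\setminus L^\circ$, as in the remark following Corollary \ref{cor:cone}). The valuation is $\fv_\F=\fv_{(\psi,Q,\gamma)}$ for fixed $\psi,\gamma$. Now apply Theorem \ref{thm:no}: for $D=\sum u_iD_i$ with $u$ in a fixed chamber $\tau$, the Newton-Okounkov body $\Delta(D,\fv_\F)$ is the $\rho$-image of
\[
\left\{(w,v)\in \Box_D\times\RR\ \big|\ -\Delta_{i_0}(1,w)\le v\le \sum_{i\neq i_0}\Delta_i(1,w)\right\}.
\]
Because on $\tau$ each $\Delta_i(1,w)$ is (piecewise-)linear jointly in $(u,w)$ — being a min of finitely many linear functions whose coefficients depend linearly on $u$ — and $\Box_D=\{w:(1,w)\in\sigma^\vee\}$ is cut out by the fixed inequalities coming from $\sigma^\vee$, the set of triples $(w,v,u)$ with $u\in\tau$ and $(w,v)$ in the body above is a finite union of rational polyhedra in $\RR^d\times\RR^r$ (after further subdividing $\tau$ so that each "$\min$" is achieved by a single linear functional). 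Passing through the linear maps $\rho$ and the linear change of variables relating $(\fv_\F)_D$ to $\fv_{\F,D}'$ (both described in \S\ref{sec:flag}) preserves rational polyhedrality. Hence over each chamber the relevant piece of $\Delta(X,\F)$ is rational polyhedral.

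Finally, $\Delta(X,\F)$ is by definition the closed convex cone generated by the union, over all $u$ in the pseudo-effective cone, of $\{(\fv_{\F,D}'(H^0(X,\CO(D))),u)\}$; by \cite[Theorem 4.5]{LM} its fiber over each big $\xi$ is $\Delta(\xi,\F)$. We have just shown this total space is covered by finitely many rational polyhedra indexed by the chambers $\tau$ (together with the fixed choice of $i_0$), so the closure of its convex hull is a finite union — in fact, since fibers glue consistently along common faces of adjacent chambers, it is literally the union — of finitely many rational polyhedral cones, hence rational polyhedral. The main obstacle is the bookkeeping in the first two paragraphs: one must verify that the dependence of the polyhedral divisor $\D$ encoding $R(\CO(\sum u_iD_i))$ on $u$ is piecewise linear with rational breaking loci, so that a single finite rational subdivision of the pseudo-effective cone simultaneously linearizes the tailcone, the vertices of all coefficient polyhedra, and all the "$\min$" operations appearing in $\Delta_i(1,w)$; once that is in place the polyhedrality is immediate from Theorem \ref{thm:no}. $\qed$
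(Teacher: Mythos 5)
Your proof is a legitimate alternative route, but it is genuinely different from the paper's and it has both an error and an acknowledged but unfilled gap. The paper's argument is much shorter: since $X$ is smooth projective rational of complexity one, $\spec\cox(X)$ is itself an \emph{affine} rational complexity-one $T$-variety (Remark \ref{rem:cox}); the global body $\Delta(X,\F)$ is, essentially by definition, the cone $C(\cox(X),\fv)$ for a single $\Cl(X)$-homogeneous valuation $\fv$ on $\cox(X)$; and Theorem \ref{thm:khovanskii} says $S(\cox(X),\fv)$ is finitely generated, so $C(\cox(X),\fv)$ is a rational polyhedral cone. All the chamber bookkeeping you undertake is absorbed in one stroke by moving to the Cox ring. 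Your chamber-by-chamber argument is closer in spirit to Petersen's proof of the general (not necessarily rational) statement in \cite[Theorem 5.1]{petersen}, which the paper is explicitly offering a shortcut to in the rational case.

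Two concrete problems in your write-up. First, the claim that on each chamber of your decomposition ``the tailcone $\sigma$ is constant'' is false: by construction, $\sigma^\vee$ restricted to height $1$ is the moment polytope $\Box_D$, which varies (e.g., it scales under $D\mapsto\lambda D$), so $\sigma$ varies with $u$ even within a chamber. What is true, and what your argument actually needs, is that $\sigma$ (and hence $\Box_D$, and each $\Delta_i(1,\cdot)$) depends \emph{piecewise-linearly} on $u$ with rational breaking loci. Second, you yourself flag that establishing this piecewise-linearity of $u\mapsto \D_u$ across all of $[$polarized, Example 2.5$]$ is ``the main obstacle,'' and you do not carry it out. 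Together with the need to control the $u$-dependent identification $\fv_{\F,D}'=(\fv_\F)_D+\text{(shear by }\fv_\F(f_D))$ and the $u$-dependent choice of boundary point $Q_{i_0}\in L_D$ (handled by re-embedding, which you invoke but do not pin down uniformly in $u$), this leaves a substantive verification missing. The convexity/gluing step at the end is fine in outline — a convex finite union of rational polyhedral cones is a rational polyhedral cone, and $\Delta(X,\F)$ is convex by definition — but it only bites once the piecewise-linearity has been nailed down.
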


  \begin{proof}
	  For a smooth projective rational complexity-one $T$-variety, $N^1(X)\cong \Pic(X)\cong \Cl(X)$, so we may take $D_1,\ldots, D_r$ as a $\ZZ$-basis of $\Cl(X)$. The \emph{Cox ring} of $X$ is the graded ring
\[
	\cox(X)=\bigoplus_{u\in\ZZ^r} H^0\left(X,\CO(\sum_i u_iD_i)\right)\cdot\chi^u;
\]
it is finitely generated, see e.g. \cite{hausen}. In fact, $\spec \cox(X)$ is an affine, rational complexity-one $T$-variety, see Remark \ref{rem:cox}.

Then by construction, $\Delta(X,\F)$ is just the cone $C(\cox(X),\fv)$ for the homogeneous valuation sending $s\cdot \chi^u$ to $(\fv'_{\F,D}(s),u)$, where $s\in H^0(X,\CO(D))$ for $D=\sum u_iD_i$. But $C(\cox(X),\fv)$ is the closure of the positive hull of a finitely generated semigroup by Theorem \ref{thm:khovanskii}. Hence, it is a rational polyhedral cone. 
\end{proof}

\section{Test Configurations and Degenerations}\label{sec:test}
Let $X$ be a projective variety, and $\mcL$ an ample line bundle on $X$. A \emph{test configuration} for the pair $(X,\mcL)$ is a $\KK^*$-equivariant flat family $\widetilde{X}$ over $\Aff^1$ equipped with a relatively ample equivariant $\QQ$-line bundle $\widetilde{\mcL}$ such that
\begin{enumerate}
	\item The $\KK^*$-action on $(\widetilde{X},\widetilde{\mcL})$ lifts the standard action on $\Aff^1$;
	\item The general fiber is isomorphic to $X$, with $\widetilde{\mcL}$ restricting to $\mcL$.
\end{enumerate}
Such test configurations are used in the definition of K-stability and the study of the existence of K\"ahler-Einstein metrics on Fano manifolds, see e.g. \cite{donaldson}.

Suppose that the variety in question has an action by some algebraic group $G$. We say that a test configuration $(\widetilde{X},\widetilde{\mcL})$ is $G$-equivariant if $(\widetilde{X},\widetilde{\mcL})$ is equipped with a $G$-action extending the action on $X$, and commuting with the $\KK^*$-action. In \cite[Proposition 4.2 and Theorem 4.3]{kstab}, S\"u\ss{} and the first author classified all normal irreducible special fibers appearing in $T$-equivariant test configurations for projective rational complexity-one $T$-varieties. This led to an effective criterion for determining the existence of a K\"ahler-Einstein metric on a Fano complexity-one $T$-variety \cite[Theorem 4.10]{kstab}.

The approach of \cite{kstab} to the classification of normal special fibers of test configurations utilized the fact that the total space of such a test configuration is itself a rational complexity-one $T$-variety. If one drops the normality assumption for the special fiber, then the total space must also no longer be normal and these methods do not apply. However, we can use our results on homogeneous valuations here to deal with non-normal special fibers:

Let $(X,\mcL)$ be a polarized projective rational complexity-one $T$-variety, and let $L\subset \PP^m$ and $\Delta_0,\ldots,\Delta_m \subset (N\times \ZZ)_\RR$ be such that 
\[
X(L)\cong \spec R(\mcL).
\]

\begin{thm}\label{thm:degen}
	For any $T$-equivariant test configuration $\widetilde{X}$ with irreducible and reduced special fiber $X_0$, either $X_0\cong X$ or $X_0$ is a toric variety of the form $\proj \KK[S]$, where 
\begin{equation*}
S=\left\{ (u,v)\in (\sigma^\vee\cap M\times \ZZ)\times \ZZ\ \big |\  -  \lfloor \Delta_j(u) \rfloor
\leq v \leq  \sum_{i\neq j} \lfloor \Delta_i(u) \rfloor \right\}
\end{equation*}
for some $0\leq j \leq r$, or
\begin{equation*}
S=\left\{ (u,v)\in (\sigma^\vee\cap M\times \ZZ)\times \ZZ\ \big |\  0
\leq v \leq  \sum_{i} \lfloor \Delta_i(u) \rfloor \right\}.
\end{equation*}
Here, the $\ZZ$-grading for $\proj$ is given by the first $\ZZ$-factor in $M\times \ZZ\times \ZZ$.
\end{thm}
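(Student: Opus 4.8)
The plan is to reduce the classification to the analysis of homogeneous valuations carried out in \S\ref{sec:homog}, the key tool being Corollary \ref{cor:cone}.

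First I would observe that a $T$-equivariant test configuration of $(X,\mcL)$ determines a finitely generated $\ZZ$-filtration $\mathcal F$ of the section ring $R(\mcL)$ that is compatible with both the $\ZZ_{\geq 0}$-grading defining $\mcL$ and the $M$-grading coming from $T$, and that $X_0=\proj\operatorname{gr}_{\mathcal F}R(\mcL)$; the $\KK^*$ of the test configuration together with $T$ realizes $\spec R(\mcL)$ as the affine complexity-one $T\times\KK^*$-variety of \S\ref{sec:petersen}, with coefficients $\Delta_i\subset(N\times\ZZ)_\QQ$ as in \cite[Example 2.5]{polarized}. Since $X_0$ is reduced and irreducible, after passing to a Veronese subalgebra $\operatorname{gr}_{\mathcal F}R(\mcL)$ is the homogeneous coordinate ring of $X_0$ under an ample embedding, hence an integral domain; and a finitely generated $\ZZ$-filtration with integral associated graded is the filtration of a valuation (via the theory of Rees algebras, cf.\ \cite{Kaveh-Manon-NOK}). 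Tracking the two gradings, $\mathcal F$ is therefore the filtration of a valuation $\fv$ of $R(\mcL)$ homogeneous for the $M\times\ZZ$-grading, and by Proposition \ref{prop:homog} (applied to the $T\times\KK^*$-action) I may write $\fv=\fv_{(\psi,Q,\gamma)}$ for some $\psi\colon M\times\ZZ\to\ZZ$, $Q\in L$, and $\gamma\geq 0$.

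Next I would split into cases according to $(Q,\gamma)$. If $\gamma=0$, then $\fv$ depends only on $\psi$ applied to the multidegree, so $\operatorname{gr}_{\fv}R(\mcL)\cong R(\mcL)$ as $\ZZ_{\geq 0}$-graded rings and $X_0\cong X$. If $\gamma\neq 0$, I would use that $\KK(L)\cong\KK(\PP^1)$: on the global sections of any line bundle on $L$ the order function $\ord_Q$ takes exactly $\dim H^0$ distinct values, so $\fv$ has one-dimensional leaves and $\operatorname{gr}_{\fv}R(\mcL)\cong\KK[S(R(\mcL),\fv)]$; in particular $X_0$ is the projective toric variety $\proj\KK[S(R(\mcL),\fv)]$, graded by the polarization factor. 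It then remains to identify the value semigroup. If $Q=Q_j\in L\setminus L^\circ$, Corollary \ref{cor:cone} describes it directly as the first family in the statement. If $Q\in L^\circ$, I would re-embed $L$ in a larger projective space, adding $Q$ as a new boundary point carrying the trivial polyhedral coefficient $\sigma$ (as in the remark following Corollary \ref{cor:cone}); since that coefficient contributes $\lfloor\Delta(u)\rfloor=0$, Corollary \ref{cor:cone} now yields the second family. As every point of $L$ lies either in $L^\circ$ or is one of the $Q_i$, this exhausts all cases. (Alternatively, after refining $\fv$ to a full rank homogeneous valuation one could invoke Theorems \ref{thm:khovanskii}, \ref{thm:chris} and well-poisedness, identifying $X_0$ with one of the prime facet degenerations $\proj\In_W(J(L))$.)

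I expect the main obstacle to be the first step: passing from a $T$-equivariant test configuration with a possibly non-normal special fiber to an $M$-homogeneous rank-one valuation. One must justify carefully that the relevant filtered algebra has an integral associated graded ring, that such a filtration is a valuation filtration, and that compatibility with the $T$- and polarization gradings survives, so that Proposition \ref{prop:homog} applies and the output is genuinely of the form $\fv_{(\psi,Q,\gamma)}$. Once this reduction is in place, recognizing $\operatorname{gr}_{\fv}R(\mcL)$ as a semigroup algebra and reading off the semigroup is exactly the bookkeeping already packaged in Corollary \ref{cor:cone}.
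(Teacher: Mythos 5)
Your proof is correct and reaches the same destination as the paper (Corollary~\ref{cor:cone}/Theorem~\ref{thm:khovanskii}), but via a genuinely different route. The paper's argument goes through the Gr\"obner/tropical machinery developed in \S\ref{sec:groebner}: it embeds the test configuration equivariantly in $\PP^n\times\Aff^1$, extracts the weight vector $w$ from the $\KK^*$-action, observes that reducedness forces $\In_w(I)$ to be prime so that $w$ lies in a prime cone of $\trop(X^\circ)$, and then invokes \cite[Theorem 4]{Kaveh-Manon-NOK} to turn that prime cone into a homogeneous weight valuation whose value semigroup is read off from Theorem~\ref{thm:khovanskii}. You instead go directly from the test configuration to its $M\times\ZZ$-compatible Rees filtration, argue (again from reduced + irreducible) that the associated graded is a domain so the filtration is a valuation filtration, and then classify that valuation outright by Proposition~\ref{prop:homog} as some $\fv_{(\psi,Q,\gamma)}$. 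This bypasses the tropical detour entirely and makes the trichotomy transparent: $\gamma=0$ gives $X_0\cong X$, $Q=Q_j\in L\setminus L^\circ$ gives the first family via Corollary~\ref{cor:cone}, and $Q\in L^\circ$ gives the second family after re-embedding $L$ with $Q$ as an additional boundary point carrying coefficient $\sigma$ --- a step the paper handles only implicitly via the remark after Corollary~\ref{cor:cone}. Your route is arguably cleaner and makes the role of Proposition~\ref{prop:homog} the visible crux, while the paper's route ties the result to the tropical/Gr\"obner picture of earlier sections.

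Two small points you should tighten if writing this up. First, when you say ``a finitely generated $\ZZ$-filtration with integral associated graded is the filtration of a valuation,'' be explicit that the filtration is exhaustive and separated (standard for test-configuration filtrations, but needed for the sup-defining function to be a genuine valuation), and that $T\times\KK^*$-equivariance is what makes each filtration level a sum of its $(M\times\ZZ)$-homogeneous pieces, so that the resulting rank-one valuation is $(M\times\ZZ)$-homogeneous and Proposition~\ref{prop:homog} applies. Second, the assertion $\operatorname{gr}_{\fv}R(\mcL)\cong\KK[S(R(\mcL),\fv)]$ is formally about the \emph{full-rank} valuation that records both the multidegree and the $\ord_Q$-value --- the rank-one $\fv$ has value semigroup in $\ZZ$, not $M\times\ZZ\times\ZZ$. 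Your one-dimensional-leaves argument (that $\ord_Q$ takes $\dim H^0$ distinct values on sections of a line bundle on $\PP^1$) is exactly what makes the $(M\times\ZZ\times\ZZ)$-graded pieces of $\operatorname{gr}$ at most one-dimensional and hence identifies $\operatorname{gr}$ with the semigroup algebra of the semigroup $S$ in the statement; just phrase it in terms of the refined valuation to avoid confusing the two.
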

\begin{proof}
If $X_0$ is not isomorphic to $X$, then $T\times \KK^*$ must act effectively on $X_0$; since $X_0$ is irreducible and reduced, it must be a toric variety.

	By considering a sufficiently high multiple of $\widetilde{\mcL}$, the test configuration $\widetilde X$ can be embedded equivariantly in some $\PP^n\times\Aff^1$. If $I$ is the ideal of $X$ in $\PP^n$, then the special fiber $X_0$ is given by the initial ideal $\In_w(I)$ with regards to some weight $w$ determined by the $\KK^*$-action.
	
	Since $X_0$ is reduced by assumption, $\In_w(I)$ must be prime. In particular, it can only contain a monomial if it contains a variable, in which case $I$ must have contained a linear form. In this case, $X$ is contained in a smaller projective space, so after eventually passing to a linear subspace of $\PP^n$, we can assume that $\In_w(X)$ contains no monomials, that is, $w\in \trop(X^\circ)$. In fact, $w$ is in a prime cone $C$ of $\trop(X^\circ)$. 

	Let $R$ be the homogeneous coordinate ring of $X$.	Now, following \cite[Theorem 4]{Kaveh-Manon-NOK}, one can use the prime cone $C$ to construct a homogeneous weight valuation $\fv$ on $R$ such that $X_0$ is $\proj$ of the semigroup algebra for $S(R,\fv)$. But since $\fv$ is a homogeneous valuation, we may apply Theorem \ref{thm:khovanskii} for a concrete description of $S(R,\fv)$. 
\end{proof}

\begin{ex}We continue Examples \ref{ex:proj} and \ref{ex:proj2}. Since all coefficients $\Delta_0,\Delta_1,\Delta_2$ are lattice polytopes, it follows from Theorem \ref{thm:degen}  that for any $T$-equivariant non-trivial test configuration of $X=\PP(\Omega_{\PP^2})$ with integral special fiber, that special fiber is a \emph{normal} toric variety. Up to isomorphism, only two possibilities occur. The first is the variety cut out by the $2\times 2$ minors of 
\[
\left(\begin{array}{c c c}
x_0&x_4&x_7\\
x_1&-x_0&x_8\\
x_2&x_5&x_6
\end{array}\right)
\]
as described before.
The second (corresponding to the case where $\fv$ involves a point $Q\neq Q_i$) is cut out by the $2\times 2$ minors of 
\[
\left(\begin{array}{c c c}
x_0&x_4&x_7\\
x_1&x_0&x_8\\
x_2&x_5&x_0
\end{array}\right).
\]
We cannot see this second kind of degeneration as a Gr\"obner degeneration with regards to the embedding of Example \ref{ex:proj}, but must consider a semi-canonical embedding for $X$ involving a fourth point on $\PP^1$ apart from $0,1,\infty$.
  \end{ex}

\begin{rem}
	Our Theorem \ref{thm:degen} considers a larger class of degenerations than those considered by S\"u\ss{} and the first author in \cite{kstab}, namely, it includes test configurations with non-normal special fibers. However, it does not shed any new light on K-stability or existence of K\"ahler-Einstein metrics for rational complexity-one Fano $T$-varieties. These questions were already completely dealt with in \cite{kstab}, since one only needs to consider test configurations with normal special fibers.
\end{rem}

\subsection*{Acknowledgements} The idea of embedding a rational complexity-one $T$-variety in this fashion dates back to Altmann, Hausen, and S\"u\ss{}. We thank J\"urgen Hausen, Lars Kastner, Diane Maclagan, and  Hendrik S\"u\ss{} for helpful discussions. We acknowledge the Fields Institute and the program \emph{Combinatorial Algebraic Geometry} for support. We also thank the anonymous referees for useful feedback.
\bibliographystyle{alpha}
\bibliography{poised} 
\end{document}